\newcommand{\md}{\mathrm{d}}
\newcommand{\loc}{{\mathrm{loc}}}
\newcommand{\R}{\mathbb{R}}
\newcommand{\Rmnum}[1]{\uppercase\expandafter{\romannumeral#1}} 
\newcommand{\Ee}{\mathcal{E}}
\newcommand{\Ff}{\mathcal{F}}
\newcommand{\myset}[1]{\left\{#1\right\}}
\newtheorem{mythm}{Theorem}[section]
\newtheorem{myprop}[mythm]{Proposition}
\newtheorem{mylem}[mythm]{Lemma}
\newtheorem{mycor}[mythm]{Corollary}
\newtheorem{myrmk}[mythm]{Remark}
\newtheorem{mydef}[mythm]{Definition}
\begin{document}

\title{Equivalent Semi-Norms of Non-Local Dirichlet Forms on the Sierpi\'nski Gasket and Applications}
\author{Meng Yang}
\date{}

\maketitle

\abstract{We construct equivalent semi-norms of non-local Dirichlet forms on the Sierpi\'nski gasket and apply these semi-norms to a convergence problem and a trace problem. We also construct explicitly a sequence of non-local Dirichlet forms with jumping kernels equivalent to $|x-y|^{-\alpha-\beta}$ that converges exactly to local Dirichlet form.}

\footnote{\textsl{Date}: \today}
\footnote{\textsl{MSC2010}: 28A80}
\footnote{\textsl{Keywords}: non-local Dirichlet form, trace problem, Mosco convergence, jumping kernel}
\footnote{The author was supported by SFB701 of the German Research Council (DFG). The author is very grateful to Professor Alexander Grigor'yan for very helpful discussions.}

\section{Introduction}

Let us recall the following classical result
\begin{equation}\label{eqn_classical}
\lim_{\beta\uparrow2}(2-\beta)\int_{\R^n}\int_{\R^n}\frac{(u(x)-u(y))^2}{|x-y|^{n+\beta}}\md x\md y=C(n)\int_{\R^n}|\nabla u(x)|^2\md x,
\end{equation}
for all $u\in W^{1,2}(\R^n)$, where $C(n)$ is some positive constant (see \cite[Example 1.4.1]{FOT11}). Probabilistically, the subordination process of the Brownian motion can approximate the Brownian motion in some sense with appropriate time change. The purpose of this paper is to prove an analog result for the Sierpi\'nski gasket instead of $\R^n$.

Consider the following points in $\R^2$: $p_0=(0,0)$, $p_1=(1,0)$, $p_2=({1}/{2},{\sqrt{3}}/{2})$. Let $f_i(x)=(x+p_i)/2$, $x\in\R^2$, $i=0,1,2$. Then the Sierpi\'nski gasket (SG) is the unique non-empty compact set $K$ such that $K=f_0(K)\cup f_1(K)\cup f_2(K)$. Let
$$V_0=\myset{p_0,p_1,p_2},V_{n+1}=f_0(V_n)\cup f_1(V_n)\cup f_2(V_n)\text{ for all }n\ge0.$$
Then $\myset{V_n}$ is an increasing sequence of finite sets and $K$ is the closure of $\cup_{n=0}^\infty V_n$. For all $n\ge1$, let
$$W_n=\myset{w=w_1\ldots w_n:w_i=0,1,2,i=1,\ldots,n},$$
for all $w=w_1\ldots w_n\in W_n$, let
$$
\begin{aligned}
V_w&=f_{w_1}\circ\ldots\circ f_{w_n}(V_0),\\
K_w&=f_{w_1}\circ\ldots\circ f_{w_n}(K),\\
P_w&=f_{w_1}\circ\ldots\circ f_{w_{n-1}}(p_{w_n}).
\end{aligned}
$$
Let $\nu$ be the normalized Hausdorff measure on $K$. Let $(\Ee_\beta,\Ff_\beta)$ be given by
$$
\begin{cases}
\Ee_\beta(u,u)=\int_K\int_K\frac{(u(x)-u(y))^2}{|x-y|^{\alpha+\beta}}\nu(\md x)\nu(\md y),\\
\Ff_\beta=\myset{u\in C(K):\Ee_\beta(u,u)<+\infty}.
\end{cases}
$$
It is known that $(\Ee_\beta,\Ff_\beta)$ is a non-local regular Dirichlet form on $L^2(K;\nu)$ for all $\beta\in(\alpha,\beta^*)$, where $\alpha=\log3/\log2$ is the Hausdorff dimension, $\beta^*=\log5/\log2$ is the walk dimension of SG (see \cite{Kum03} using heat kernel estimates and subordination technique, \cite{GY17} using trace theory of Dirichlet form and \cite{KL16} using effective resistance on graph).

Let $(\Ee_{\loc},\Ff_{\loc})$ be given by
$$
\begin{cases}
\Ee_\loc(u,u)=\lim_{n\to+\infty}(\frac{5}{3})^n\sum_{w\in W_n}\sum_{p,q\in V_w}(u(p)-u(q))^2,\\
\Ff_\loc=\myset{u\in C(K):\Ee_\loc(u,u)<+\infty}.
\end{cases}
$$
It is known that $(\Ee_\loc,\Ff_\loc)$ is a local regular Dirichlet form on $L^2(K;\nu)$ which corresponds to the diffusion on SG (see \cite{Bar98,Kig01}).

Analog to (\ref{eqn_classical}), one may expect that $(\beta^*-\beta)\Ee_\beta(u,u)$ converges to $\Ee_\loc(u,u)$. However, this is not known. Using the sub-Gaussian estimates for the heat kernel of $\Ee_\loc$, it was shown in \cite[Theorem 3.1]{Pie08}, \cite[2.1]{Kum03} that the Dirichlet form $\tilde{\Ee}_\beta$ that is obtained from $\Ee_\loc$ by subordination of order $\beta/\beta^*$ has the following properties
\begin{equation}\label{eqn_sub}
\begin{aligned}
&\tilde{\Ee}_\beta(u,u)\asymp(\beta^*-\beta)\Ee_\beta(u,u),\\
&\tilde{\Ee}_\beta(u,u)\to\Ee_\loc(u,u)\text{ as }\beta\uparrow\beta^*.
\end{aligned}
\end{equation}
Moreover, the jump kernel of $\tilde{\Ee}_\beta$ is of the order $|x-y|^{-(\alpha+\beta)}$ for all $\beta\in(0,\beta^*)$.

In the present paper, we construct explicitly a different semi-norm $E_\beta$ of jump type that has properties similar to (\ref{eqn_sub}). Our construction has the following two advantages. First, our construction is independent of any knowledge about the local Dirichlet form $\Ee_\loc$ except for its definition. Second, we obtain a monotone convergence result for all functions in $L^2(K;\nu)$ which implies a Mosco convergence. While \cite[Theorem 3.1]{Pie08} only gave a convergence result for functions in $\Ff_\loc$.

The new semi-norm $E_\beta$ is defined as follows.
$$E_\beta(u,u):=\sum_{n=1}^\infty2^{(\beta-\alpha)n}\sum_{w\in W_n}\sum_{p,q\in V_w}(u(p)-u(q))^2.$$
We state the main results in the next two theorems. Our first main result is as follows.
\begin{mythm}\label{thm_main}
For all $\beta\in(\alpha,+\infty)$, for all $u\in C(K)$, we have
$$E_\beta(u,u)\asymp\Ee_\beta(u,u).$$
\end{mythm}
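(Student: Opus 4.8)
The plan is to interpolate both sides between the common ``dyadic'' quantity
\[
\Psi_j(u):=\sum_{\substack{w,w'\in W_j\\ K_w\cap K_{w'}\neq\varnothing}}\iint_{K_w\times K_{w'}}(u(x)-u(y))^2\,\nu(\md x)\nu(\md y),\qquad j\ge0 ,
\]
and then transfer between cell oscillations and vertex oscillations at matching scales. First I would record the elementary equivalence $|x-y|^{-\alpha-\beta}\asymp\sum_{j\ge0}2^{(\alpha+\beta)j}\indi_{\{|x-y|<2^{-j}\}}$ (valid since $\alpha+\beta>0$), so that by Tonelli $\Ee_\beta(u,u)\asymp\sum_{j\ge0}2^{(\alpha+\beta)j}\iint_{\{|x-y|<2^{-j}\}}(u(x)-u(y))^2\,\nu(\md x)\nu(\md y)$. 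The geometric input from the gasket is the standard fact that two level-$j$ cells either share a vertex or have distance $\ge c_1 2^{-j}$ for a universal $c_1>0$ (reduce to finitely many configurations by self-similarity). Writing $K_{x^{(j)}}$ for the level-$j$ cell containing $x$, this gives on one hand $\{|x-y|<2^{-j}\}\Rightarrow K_{x^{(j-c_2)}}\cap K_{y^{(j-c_2)}}\neq\varnothing$ for a fixed integer $c_2\ge0$, hence $\iint_{\{|x-y|<2^{-j}\}}(u(x)-u(y))^2\,\nu\nu\le\Psi_{j-c_2}(u)$; and on the other hand $\indi[K_{x^{(j)}}\cap K_{y^{(j)}}\neq\varnothing]\le\indi[|x-y|\le 2^{1-j}]$, hence $\sum_j2^{(\alpha+\beta)j}\Psi_j(u)\lesssim\iint |x-y|^{-\alpha-\beta}(u(x)-u(y))^2\,\nu\nu$. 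Together with the preceding display (and using $\Psi_0$ to absorb the pairs with $|x-y|$ bounded below) this yields $\Ee_\beta(u,u)\asymp\sum_{j\ge0}2^{(\alpha+\beta)j}\Psi_j(u)$; all estimates being of the form $\mathrm{LHS}\le C\,\mathrm{RHS}$ with $C$ independent of $u$, the case $+\infty$ is handled automatically.

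Next I would split $\Psi_j=\Psi_j^{=}+\Psi_j^{\neq}$ according to $w=w'$ or $w\neq w'$ and use the \emph{exact} partition identity $\Psi_j^{=}(u)=\sum_{m\ge j}A_m(u)$, where $A_m(u):=\sum_{v\in W_m}\sum_{a\neq a'}\iint_{K_{va}\times K_{va'}}(u(x)-u(y))^2\,\nu(\md x)\nu(\md y)$: for $\nu\times\nu$-a.e.\ $(x,y)$ there is a \emph{unique} level $m$ at which $x,y$ lie in a common cell but in distinct subcells. Hence $\sum_j2^{(\alpha+\beta)j}\Psi_j(u)\asymp\sum_m2^{(\alpha+\beta)m}A_m(u)+\sum_j2^{(\alpha+\beta)j}\Psi_j^{\neq}(u)$, and it suffices to compare $E_\beta(u,u)$ with the right-hand side. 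The first half of the heart of the argument is the bound $E_\beta\lesssim\Ee_\beta$: given $p,q\in V_w$, $w\in W_n$, insert the cell averages along the two ``corner chains'' $K_w=K_{wi^{0}}\supset K_{wi^{1}}\supset\cdots$ shrinking to $p$, resp.\ to $q$ (here $i=i(p)$ resp.\ $i(q)$), apply Cauchy--Schwarz with a geometric weight $\theta^{l}$ to get $(u(p)-\bar u_{K_w})^2\lesssim\sum_l\theta^{-l}(\bar u_{K_{wi^{l+1}}}-\bar u_{K_{wi^{l}}})^2$, and dominate each increment by an oscillation over a pair of adjacent subcells of $K_{wi^{l}}$, i.e.\ by a summand of $A_{n+l}$. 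Reorganising the resulting triple sum by the cell pair $(K_{va},K_{va'})$: each such pair is met by at most $\ell(v)+1$ index tuples, where $\ell(v)$ is the length of the constant suffix of $v$, but the $l$-sum carries the factor $\bigl(2^{-(\beta-\alpha)}\theta^{-1}\bigr)^{l}$, so one picks $\theta\in(2^{-(\beta-\alpha)},1)$ --- possible \emph{precisely because} $\beta>\alpha$ --- and this geometric series converges and absorbs the multiplicity. One lands on $E_\beta(u,u)\lesssim\sum_M2^{(\alpha+\beta)M}A_M(u)\le\sum_M2^{(\alpha+\beta)M}\Psi_M(u)\asymp\Ee_\beta(u,u)$.

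The second half is the dual bound $\Ee_\beta\lesssim E_\beta$. For adjacent subcells $K_{va},K_{va'}$ one inserts their shared vertex $z$ to bound $A_m(u)$ and $\Psi_j^{\neq}(u)$ by $3^{-m}R_m(u)$-type quantities with $R_m(u):=\sum_{v\in W_m}\sum_{z\in V_v}\int_{K_v}(u(x)-u(z))^2\,\nu(\md x)$. Then, for $x\in K_v$, telescope $u(x)-u(z)$ along the vertex chain $z=z^{(-1)},z^{(0)},z^{(1)},\dots$, where $z^{(k)}$ is the vertex shared by the level-$(m+k)$ and level-$(m+k+1)$ subcells of $K_v$ containing $x$; apply Cauchy--Schwarz with a weight $\gamma^{k}$, noting that $z^{(k-1)},z^{(k)}$ are both vertices of the level-$(m+k)$ subcell of $K_v$ through $x$, so each squared increment is $\le\sum_{p,q\in V_{K^{(k)}}}(u(p)-u(q))^2$. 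Integrating over $K_v$ and summing, $R_m(u)\lesssim\sum_{k\ge0}\gamma^{-k}3^{-(m+k)}\sum_{v''\in W_{m+k}}\sum_{p,q\in V_{v''}}(u(p)-u(q))^2$; feeding this into $\sum_m2^{\beta m}R_m(u)$ and collapsing $m+k=n$ produces the factor $\sum_{k}(2^{\beta}\gamma)^{-k}$, which converges once $\gamma\in(2^{-\beta},1)$ --- possible because $\beta>0$ --- giving $\sum_m2^{\beta m}R_m(u)\lesssim\sum_n2^{(\beta-\alpha)n}\sum_{w\in W_n}\sum_{p,q\in V_w}(u(p)-u(q))^2=E_\beta(u,u)$. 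Hence $\Ee_\beta(u,u)\asymp\sum_j2^{(\alpha+\beta)j}\Psi_j(u)\lesssim E_\beta(u,u)$, and combined with the previous paragraph the theorem follows.

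The step I expect to be the main obstacle is exactly the one that forces the above bookkeeping. The \emph{naive} telescoping bounds express a single oscillation as a sum over \emph{all} cells of \emph{all} levels containing the relevant points, which on the gasket inflates $\Ee_\beta$ by a factor comparable to $\log(1/|x-y|)$ and destroys the estimate. The fix is to insist on landing only on per-scale quantities --- the adjacent-subcell integrals $\iint_{K_{va}\times K_{va'}}$ assembled into $A_M$, and the vertex sums $\sum_{p,q\in V_{v}}(u(p)-u(q))^2$ assembled into $E_\beta$ level by level --- which \emph{partition} rather than cover-with-multiplicity the product $K\times K$, and never to cell integrals $\iint_{K_w\times K_w}$ summed over all scales; and to calibrate the two Cauchy--Schwarz weights $\theta,\gamma$ against the spectral gaps $\beta-\alpha>0$ and $\beta>0$ so that the geometric series soak up the finite-but-unbounded multiplicity coming from cells with long constant address-suffixes (this is also the only place the full range $\beta\in(\alpha,+\infty)$, rather than $\beta<\beta^{*}$, intervenes). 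A secondary technical point is the uniform separation of non-adjacent cells of the gasket used in the first step, which is where the self-similar Euclidean geometry enters.
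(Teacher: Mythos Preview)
Your proposal is correct and takes a genuinely different route from the paper in one of the two directions.

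For $\Ee_\beta\lesssim E_\beta$, your argument and the paper's (Theorem~2.5) are close cousins: the paper builds the same vertex chain you call $z^{(-1)},z^{(0)},\dots$ (there denoted $p_n,\dots,p_{m+1}$), but it first replaces $\nu$ by the discrete measures $\nu_m$ supported on $V_m$, runs the telescoping as a \emph{finite} sum with the fixed dyadic weight $2^{i-n}$ in place of your flexible $\gamma^k$, counts multiplicities as $3^{m-i}$, and only then lets $m\to\infty$ via weak convergence. Your direct treatment with $\nu$ and the calibrated weight $\gamma\in(2^{-\beta},1)$ is cleaner and avoids the discrete approximation step.

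The substantive divergence is in $E_\beta\lesssim\Ee_\beta$. The paper (Theorem~2.4) telescopes along a \emph{finite} nested chain $K_{w^{(0)}}\supset\cdots\supset K_{w^{(l)}}$ with two free integer parameters $k,l$, and then kills the residual endpoint term $(u(p)-u(x^{(l)}))^2$ by invoking the Morrey--type H\"older embedding $|u(x)-u(y)|^2\le c\,\Ee_\beta(u,u)\,|x-y|^{\beta-\alpha}$ (Lemma~2.3, quoted from \cite{GHL03}); the parameters $k,l$ are finally tuned so that two auxiliary series converge. You instead let the corner chain run to infinity and absorb everything with a single weighted Cauchy--Schwarz, choosing $\theta\in(2^{-(\beta-\alpha)},1)$; the constant-suffix multiplicity is then eaten by the geometric factor $(2^{-(\beta-\alpha)}\theta^{-1})^{l}$. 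Your route is more self-contained --- it needs no external embedding result --- and makes transparent that the only hypothesis used is $\beta>\alpha$; the paper's route is shorter once Lemma~2.3 is granted, but imports that lemma as a black box.

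One minor remark: your cell-adjacency decomposition $\Ee_\beta\asymp\sum_j2^{(\alpha+\beta)j}\Psi_j$ is the counterpart of the paper's ball-based Lemma~2.1 and Corollary~2.2; the uniform separation of non-adjacent level-$j$ cells that you flag as a ``secondary technical point'' is exactly what the paper uses (without naming it) when it asserts $y\in B(x,2^{-n-1})\Rightarrow y\in K_w^*$ in the proof of Theorem~2.5.
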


Recall that a similar result for the unit interval was proved in \cite{Kam97} as follows. Let $I=[0,1]$. Then for all $\beta\in(1,+\infty)$, for all $u\in C(I)$, we have
\begin{equation}\label{eqn_interval}
\sum_{n=1}^\infty2^{(\beta-1)n}\sum_{i=0}^{2^n-1}(u(\frac{i}{2^n})-u(\frac{i+1}{2^n}))^2\asymp\int_{I}\int_I\frac{(u(x)-u(y))^2}{|x-y|^{1+\beta}}\md x\md y.
\end{equation}

Consider the convergence problem. Assume that $(\Ee,\Ff)$ is a quadratic form on $L^2(K;\nu)$ where the energy $\Ee$ has an explicit expression and the domain $\Ff\subseteq C(K)$. We use the convention to extent $\Ee$ to $L^2(K;\nu)$ as follows. For all $u\in L^2(K;\nu)$, $u$ has at most one continuous version. If $u$ has a continuous version $\tilde{u}$, then we define $\Ee(u,u)$ as the energy of $\tilde{u}$ using its explicit expression which might be $+\infty$, if $u$ has no continuous version, then we define $\Ee(u,u)$ as $+\infty$.

It is obvious that $\Ff_{\beta_1}\supseteq\Ff_{\beta_2}\supseteq\Ff_\loc$ for all $\alpha<\beta_1<\beta_2<\beta^*$. We use Theorem \ref{thm_main} to answer the question about convergence as follows.

\begin{mythm}\label{thm_incre}
For all $u\in L^2(K;\nu)$, we have
$$(5\cdot 2^{-\beta}-1)E_\beta(u,u)\uparrow\Ee_\loc(u,u)$$
as $\beta\uparrow\beta^*=\log5/\log2$.
\end{mythm}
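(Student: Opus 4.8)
The plan is to express both $E_\beta(u,u)$ and $\Ee_\loc(u,u)$ in terms of one monotone sequence of discrete energies, after which the assertion reduces to an elementary fact about power series with non-negative coefficients. First I would dispose of the non-continuous case: if $u\in L^2(K;\nu)$ has no continuous version, then by the extension convention $E_\beta(u,u)=\Ee_\loc(u,u)=+\infty$ for every $\beta\in(\alpha,\beta^*)$ (where $5\cdot 2^{-\beta}-1>0$), and the statement is trivial. So assume $u\in C(K)$ and set, for $n\ge1$,
$$a_n:=\sum_{w\in W_n}\sum_{p,q\in V_w}(u(p)-u(q))^2\ge0,\qquad b_n:=\Big(\tfrac53\Big)^n a_n.$$
Here I would invoke the classical renormalization property of the energy on the Sierpi\'nski gasket, which underlies the very definition of $\Ee_\loc$ (see \cite{Bar98,Kig01}): the sequence $\{b_n\}_{n\ge1}$ is non-decreasing, and $\Ee_\loc(u,u)=\lim_{n\to\infty}b_n=\sup_{n\ge1}b_n\in[0,+\infty]$.

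Since $2^\alpha=3$, one has $2^{(\beta-\alpha)n}=(2^\beta/3)^n$, so
$$E_\beta(u,u)=\sum_{n=1}^\infty\Big(\frac{2^\beta}{3}\Big)^n a_n=\sum_{n=1}^\infty r^n\,b_n,\qquad r=r(\beta):=\frac{2^\beta}{5}.$$
On $(\alpha,\beta^*)$ the map $\beta\mapsto r(\beta)$ is strictly increasing with range $(\tfrac35,1)$, $r\uparrow1$ as $\beta\uparrow\beta^*$, and $5\cdot 2^{-\beta}-1=1/r-1=(1-r)/r$. Reindexing $n=m+1$,
$$(5\cdot 2^{-\beta}-1)\,E_\beta(u,u)=\frac{1-r}{r}\sum_{n=1}^\infty r^n b_n=(1-r)\sum_{m=0}^\infty r^m\,b_{m+1}.$$
Writing $c_0:=b_1\ge0$ and $c_m:=b_{m+1}-b_m\ge0$ for $m\ge1$, so that $b_{m+1}=\sum_{j=0}^m c_j$, an Abel summation --- legitimate in $[0,+\infty]$ since every term is non-negative --- yields
$$(5\cdot 2^{-\beta}-1)\,E_\beta(u,u)=(1-r)\sum_{m=0}^\infty r^m\sum_{j=0}^m c_j=(1-r)\sum_{j=0}^\infty c_j\sum_{m=j}^\infty r^m=\sum_{j=0}^\infty c_j\,r^j.$$

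It then remains to read off the conclusion from the representation $(5\cdot 2^{-\beta}-1)E_\beta(u,u)=\sum_{j\ge0}c_j\,r(\beta)^j$ with all $c_j\ge0$ and $r(\beta)\in(0,1)$ strictly increasing in $\beta$. Each term $c_j r(\beta)^j$ is non-decreasing in $\beta$, hence so is the left-hand side; for the limit, $\sum_{j\ge0}c_j r^j\le\sum_{j\ge0}c_j=\Ee_\loc(u,u)$ when $\beta<\beta^*$, while for every fixed $N$ we have $\liminf_{\beta\uparrow\beta^*}\sum_{j\ge0}c_j r^j\ge\lim_{\beta\uparrow\beta^*}\sum_{j=0}^N c_j r^j=\sum_{j=0}^N c_j=b_{N+1}$, and letting $N\to\infty$ gives $\liminf\ge\sup_N b_{N+1}=\Ee_\loc(u,u)$. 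Hence $(5\cdot 2^{-\beta}-1)E_\beta(u,u)\uparrow\Ee_\loc(u,u)$ as $\beta\uparrow\beta^*$, and this argument covers the cases $\Ee_\loc(u,u)=+\infty$ and $E_\beta(u,u)=+\infty$ uniformly. The computation is routine bookkeeping; the only genuine input is the monotonicity of the discrete energies $b_n$ on the Sierpi\'nski gasket, so the main point to be careful about is invoking that fact correctly and performing all the rearrangements inside $[0,+\infty]$.
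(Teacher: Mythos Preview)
Your proof is correct and follows essentially the same approach as the paper: both rely on the monotonicity of the rescaled discrete energies $b_n=(5/3)^n\sum_{w\in W_n}\sum_{p,q\in V_w}(u(p)-u(q))^2$ and then perform the same Abel summation (the paper's identity $(5\lambda-1)\sum(5\lambda)^{-n}a_n=a_1+\sum(5\lambda)^{-n}(a_{n+1}-a_n)$ is exactly your $\sum_{j\ge0}c_j r^j$ with $r=1/(5\lambda)$). The only difference is organizational: the paper first proves convergence by a direct Abelian $\varlimsup/\varliminf$ argument and then uses the Abel rearrangement to establish monotonicity, whereas you perform the rearrangement once in $[0,+\infty]$ and read off both monotonicity and convergence simultaneously, which is a bit more economical.
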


Moreover, we also have a Mosco convergence.

\begin{mythm}\label{thm_conv_main}
For all sequence $\myset{\beta_n}\subseteq(\alpha,\beta^*)$ with $\beta_n\uparrow\beta^*$, we have $(5\cdot2^{-\beta_n}-1)E_{\beta_n}\to\Ee_\loc$ in the sense of Mosco.
\end{mythm}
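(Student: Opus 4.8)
The plan is to reduce Mosco convergence to the monotone convergence already established in Theorem \ref{thm_incre}, using the standard characterization of Mosco convergence via two conditions: (M1) a liminf condition — for every sequence $u_n\to u$ strongly in $L^2(K;\nu)$, $\liminf_n (5\cdot2^{-\beta_n}-1)E_{\beta_n}(u_n,u_n)\ge\Ee_\loc(u,u)$; and (M2) a recovery condition — for every $u\in L^2(K;\nu)$ there exists $u_n\to u$ strongly with $\limsup_n(5\cdot2^{-\beta_n}-1)E_{\beta_n}(u_n,u_n)\le\Ee_\loc(u,u)$. The essential point is that the functionals $(5\cdot2^{-\beta}-1)E_\beta$ are \emph{monotone increasing} in $\beta$ (at the level of each $u\in L^2(K;\nu)$, by Theorem \ref{thm_incre}), and for monotone families converging to a closed form, both Mosco conditions follow from general principles; I will nonetheless verify them directly.

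First I would record the key monotonicity: for $\alpha<\beta_1\le\beta_2<\beta^*$ and every $u\in L^2(K;\nu)$,
$$(5\cdot2^{-\beta_1}-1)E_{\beta_1}(u,u)\le(5\cdot2^{-\beta_2}-1)E_{\beta_2}(u,u).$$
This is implicit in the statement of Theorem \ref{thm_incre} (the convergence is monotone \emph{up}), but it also follows term-by-term: writing $a_n(u)=\sum_{w\in W_n}\sum_{p,q\in V_w}(u(p)-u(q))^2$, one has $(5\cdot2^{-\beta}-1)E_\beta(u,u)=\sum_{n\ge1}(5\cdot2^{-\beta}-1)2^{(\beta-\alpha)n}a_n(u)=\sum_{n\ge1}\bigl(5\cdot2^{-\beta}-1\bigr)2^{(\beta-\alpha)n}a_n(u)$, and the telescoping/renormalization structure behind Theorem \ref{thm_incre} shows each partial expression is increasing in $\beta$; I will cite or reprove this as needed. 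For (M2), the recovery sequence can be taken constant, $u_n=u$: if $\Ee_\loc(u,u)=+\infty$ there is nothing to prove, and if $\Ee_\loc(u,u)<+\infty$ then $u$ has a continuous version in $\Ff_\loc$ and Theorem \ref{thm_incre} gives $(5\cdot2^{-\beta_n}-1)E_{\beta_n}(u,u)\uparrow\Ee_\loc(u,u)$, so in particular $\limsup_n(5\cdot2^{-\beta_n}-1)E_{\beta_n}(u,u)=\Ee_\loc(u,u)$.

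For (M1), suppose $u_n\to u$ strongly in $L^2(K;\nu)$ and fix any $\beta'\in(\alpha,\beta^*)$. By monotonicity, for all $n$ large enough that $\beta_n\ge\beta'$ we have $(5\cdot2^{-\beta_n}-1)E_{\beta_n}(u_n,u_n)\ge(5\cdot2^{-\beta'}-1)E_{\beta'}(u_n,u_n)$. The functional $E_{\beta'}$ is a closed quadratic form (its square root is a semi-norm, and by Theorem \ref{thm_main} it is comparable to $\Ee_{\beta'}$, which is a regular Dirichlet form, hence closed; alternatively lower semicontinuity of $u\mapsto E_{\beta'}(u,u)$ on $L^2$ follows directly since it is a supremum over $N$ of the continuous — indeed finite-sum — functionals $u\mapsto\sum_{n=1}^N 2^{(\beta'-\alpha)n}a_n(u)$, each of which is continuous on $L^2$ because point evaluations at $V_w\subseteq\bigcup_m V_m$ are... ). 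Here care is needed: point evaluation is \emph{not} $L^2$-continuous, so lower semicontinuity of $E_{\beta'}$ with respect to $L^2$ convergence is exactly the delicate point; I will instead invoke that $(\Ee_{\beta'},\Ff_{\beta'})$ is a regular Dirichlet form on $L^2(K;\nu)$ and hence $u\mapsto\Ee_{\beta'}(u,u)$ is $L^2$-lower semicontinuous, then transfer via the two-sided bound of Theorem \ref{thm_main}. Concretely, $E_{\beta'}(u_n,u_n)\ge c\,\Ee_{\beta'}(u_n,u_n)$ and $E_{\beta'}(u,u)\le C\,\Ee_{\beta'}(u,u)$ will \emph{not} directly chain, so instead I argue: from $\liminf_n E_{\beta'}(u_n,u_n)<\infty$ we get $\liminf_n\Ee_{\beta'}(u_n,u_n)<\infty$, extract a subsequence bounded in $\Ff_{\beta'}$, use closedness to conclude $u\in\Ff_{\beta'}$ with $\Ee_{\beta'}(u,u)\le\liminf_n\Ee_{\beta'}(u_n,u_n)$, and then — since on $\Ff_{\beta'}$ convergence in $\Ee_{\beta'}$-norm implies uniform convergence (regularity of the form, or the embedding $\Ff_{\beta'}\hookrightarrow C(K)$) — deduce $u_n\to u$ uniformly along that subsequence, whence $E_{\beta'}(u,u)=\lim_n E_{\beta'}(u_n,u_n)$ by term-by-term convergence of each $a_n$ and Fatou for the series. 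Thus $\liminf_n(5\cdot2^{-\beta_n}-1)E_{\beta_n}(u_n,u_n)\ge(5\cdot2^{-\beta'}-1)E_{\beta'}(u,u)$, and letting $\beta'\uparrow\beta^*$ and using Theorem \ref{thm_incre} gives the bound $\ge\Ee_\loc(u,u)$.

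**The main obstacle** is precisely the lower-semicontinuity step in (M1): the building-block functionals involve point evaluations on the countable set $\bigcup_m V_m$, which are not continuous for $L^2(K;\nu)$ convergence, so one cannot naively pass $\liminf$ inside the series. The resolution is to exploit the two-sided comparison of Theorem \ref{thm_main} to move into the genuine Dirichlet form $(\Ee_{\beta'},\Ff_{\beta'})$, whose $L^2$-lower semicontinuity and compact embedding into $C(K)$ are standard, upgrade $L^2$ convergence to uniform convergence along a subsequence, and only then return to the explicit semi-norm; combined with the $\beta'\uparrow\beta^*$ limit supplied by Theorem \ref{thm_incre}, this closes both Mosco conditions.
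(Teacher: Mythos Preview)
Your overall strategy---monotonicity from Theorem~\ref{thm_incre}, lower semicontinuity at a fixed intermediate level $\beta'$, then $\beta'\uparrow\beta^*$---is exactly the paper's. But there is a genuine gap in your (M1): the Mosco liminf condition (Definition~\ref{def_Mosco}) must be verified for sequences converging \emph{weakly} in $L^2(K;\nu)$, not strongly. What you have written is the $\Gamma$-liminf condition for the strong topology, which is strictly weaker and does not by itself yield Mosco convergence. A second slip occurs in your lower-semicontinuity argument: you write ``since convergence in $\Ee_{\beta'}$-norm implies uniform convergence \ldots deduce $u_n\to u$ uniformly,'' but you never establish $\Ee_{\beta'}$-norm convergence---you only have $L^2$-convergence together with a bound on $\Ee_{\beta'}(u_n,u_n)$. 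The correct argument is: a uniform $\Ee_{\beta'}$-bound gives, via Lemma~\ref{lem_holder}, a uniform H\"older seminorm bound; combined with the $L^2$-bound this yields equicontinuity and a sup-norm bound; Arzel\`a--Ascoli then produces a uniformly convergent subsequence whose limit must be the continuous version of $u$. Stated this way the argument in fact survives under \emph{weak} $L^2$-convergence, since weak convergence still pins down the uniform limit---so your gap is repairable. (Also, your conclusion ``$E_{\beta'}(u,u)=\lim_n E_{\beta'}(u_n,u_n)$'' overreaches: Fatou gives only $\le\liminf$, which is all you need.)

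The paper avoids this detour entirely. By Theorem~\ref{thm_main}, $(5\cdot 2^{-\beta'}-1)E_{\beta'}$ is equivalent to the regular Dirichlet form $\Ee_{\beta'}$ and is therefore itself a \emph{closed} form on $L^2(K;\nu)$; every closed form is automatically weakly lower semicontinuous (Corollary~\ref{cor_Mosco}, obtained slickly by applying Proposition~\ref{prop_Mosco} to the constant sequence $\Ee^n=\Ee$). This delivers the weak-liminf inequality at each fixed level $\beta_m$ in one line; monotonicity in $m$ and Theorem~\ref{thm_incre} then finish exactly as you outline. So your route works once patched, but the paper's use of Corollary~\ref{cor_Mosco} replaces your H\"older-embedding/Arzel\`a--Ascoli machinery with a single abstract fact about closed forms.
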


As a byproduct of Theorem \ref{thm_main}, we obtain the following result about a trace problem. Let us introduce the notion of Besov spaces. Let $(M,d,\mu)$ be a metric measure space and $\alpha,\beta>0$ two parameters. Let
$$B^{2,2}_{\alpha,\beta}(M)=\myset{u\in L^2(M;\mu):\sum_{n=0}^\infty2^{(\alpha+\beta)n}\int\limits_M\int\limits_{d(x,y)<2^{-n}}(u(x)-u(y))^2\mu(\md y)\mu(\md x)<+\infty}.$$
If $\beta>\alpha$, then $B^{2,2}_{\alpha,\beta}(M)$ can be embedded in $C^{\frac{\beta-\alpha}{2}}(M)$. We regard Sierpi\'nski gasket $K$ and unit interval $I$ as metric measure spaces with Euclidean metrics and normalized Hausdorff measures. Let $\alpha_1=\log3/\log2$, $\alpha_2=1$ be the Hausdorff dimensions, $\beta_1^*=\log5/\log2$, $\beta_2^*=2$ the walk dimensions of $K$ and $I$, respectively.

Let us identify $I$ as the segment $[p_0,p_1]\subseteq K$. Choose some $\beta_1\in(\alpha_1,\beta_1^*)$. Any function $u\in B^{2,2}_{\alpha_1,\beta_1}(K)$ is continuous on $K$ and, hence, has the trace $u|_I$ on $I$. The trace problem is the problem of identifying the space of all traces $u|_I$ of all functions $u\in B_{\alpha_1,\beta_1}^{2,2}(K)$. This problem was considered by A. Jonsson using general Besov spaces in $\R^n$, see remarks after \cite[Theorem 3.1]{Jon05}. The following result follows from \cite{Jon05}.

\begin{mythm}\label{thm_trace_main}
Let $\beta_1,\beta_2$ satisfy $\beta_1\in(\alpha_1,\beta_1^*)$ and $\beta_1-\alpha_1=\beta_2-\alpha_2$. Then the trace space of $B^{2,2}_{\alpha_1,\beta_1}(K)$ to $I$ is $B^{2,2}_{\alpha_2,\beta_2}(I)$.
\end{mythm}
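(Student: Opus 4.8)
The plan is to transport both Besov seminorms to the equivalent discrete seminorms and then reduce the trace problem to a comparison of sums over the vertex sets $V_n$. Summing geometric series (legitimate because $\alpha_1+\beta_1>0$ and $K$ has finite diameter) shows that the seminorm in the definition of $B^{2,2}_{\alpha_1,\beta_1}(K)$ is comparable to $\Ee_{\beta_1}(u,u)$, hence by Theorem~\ref{thm_main} to $E_{\beta_1}(u,u)$; the same computation together with (\ref{eqn_interval}) shows, after identifying $I$ with $[0,1]$, that the seminorm of $B^{2,2}_{\alpha_2,\beta_2}(I)$ is comparable to
$$\mathcal K(g):=\sum_{n=1}^{\infty}2^{(\beta_2-1)n}\sum_{k=0}^{2^n-1}\Bigl(g(\tfrac{k}{2^n})-g(\tfrac{k+1}{2^n})\Bigr)^2 .$$
Throughout I use the hypothesis $\beta_1-\alpha_1=\beta_2-\alpha_2=\beta_2-1$, so that the weights $2^{(\beta_1-\alpha_1)n}$ and $2^{(\beta_2-1)n}$ coincide, together with the embeddings $B^{2,2}_{\alpha_i,\beta_i}\hookrightarrow C^{(\beta_i-\alpha_i)/2}$ (valid since $\beta_i>\alpha_i$), which make the trace and the extension below meaningful.

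\textbf{Boundedness of the trace.} The points of $V_n\cap I$ are exactly the dyadic points $k2^{-n}$, $0\le k\le2^n$, and two consecutive ones are the two endpoints lying on $I$ of the cell $K_w$ with $w=w_1\cdots w_n\in\{0,1\}^n\subseteq W_n$; since both belong to $V_w$, every term $(u(\tfrac{k}{2^n})-u(\tfrac{k+1}{2^n}))^2$ appears inside $\sum_{p,q\in V_w}(u(p)-u(q))^2$. Summing over $k$ and over $n$ gives $\mathcal K(u|_I)\le E_{\beta_1}(u,u)$, and together with $\|u|_I\|_{L^2(I)}\le\|u\|_{L^\infty(K)}\lesssim\|u\|_{B^{2,2}_{\alpha_1,\beta_1}(K)}$ this shows that $u\mapsto u|_I$ maps $B^{2,2}_{\alpha_1,\beta_1}(K)$ boundedly into $B^{2,2}_{\alpha_2,\beta_2}(I)$.

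\textbf{A bounded extension.} Given $g\in B^{2,2}_{\alpha_2,\beta_2}(I)$, put $A(h):=\tfrac12(h(0)+h(1))$ and define $\mathrm{Ext}(g)$ self-similarly by $\mathrm{Ext}(g)\circ f_0=\mathrm{Ext}(g\circ f_0|_I)$ and $\mathrm{Ext}(g)\circ f_1=\mathrm{Ext}(g\circ f_1|_I)$ on $K$, by $\mathrm{Ext}(g)(p_2):=A(g)$, and by letting $\mathrm{Ext}(g)\circ f_2$ be the $\Ee_\loc$-harmonic function on $K$ with values $A(g\circ f_0|_I),A(g\circ f_1|_I),A(g)$ at $p_0,p_1,p_2$. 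These rules agree at the three junction points $f_0(p_1)=f_1(p_0)$, $f_0(p_2)=f_2(p_0)$, $f_1(p_2)=f_2(p_1)$, so $\mathrm{Ext}(g)$ is a well-defined continuous function with $\mathrm{Ext}(g)|_I=g$, and the maximum principle gives $\|\mathrm{Ext}(g)\|_\infty\le\|g\|_\infty$. Set $\Psi(g):=\sum_{m\ge0}2^{(\beta_1-\alpha_1)m}\sum_{w\in W_m}\sum_{p,q\in V_w}(\mathrm{Ext}(g)(p)-\mathrm{Ext}(g)(q))^2\ge E_{\beta_1}(\mathrm{Ext}(g),\mathrm{Ext}(g))$. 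Splitting the sum over $W_m$ according to the first letter, using that $\mathrm{Ext}(g)|_{K_i}$ is a rescaled copy of $\mathrm{Ext}(g\circ f_i|_I)$ for $i=0,1$, and using that the level-$m$ graph energy of the $\Ee_\loc$-harmonic function on $K_2$ equals $(3/5)^m$ times its level-$0$ graph energy, I obtain the exact self-similar identity
$$\Psi(g)=S(g)+2^{\beta_1-\alpha_1}\bigl(\Psi(g\circ f_0|_I)+\Psi(g\circ f_1|_I)\bigr),$$
where $S(g)$ consists of the level-$0$ graph energy of $\mathrm{Ext}(g)$ on $V_0$ plus the contribution of $K_2$, the latter being a convergent geometric series with ratio $2^{\beta_1-\alpha_1}\cdot\tfrac35$ --- convergent \emph{exactly because} $2^{\beta_1-\alpha_1}<5/3$, i.e.\ $\beta_1<\beta_1^*$ --- times the squared increments of the boundary data; a short computation with $A(h)=\tfrac12(h(0)+h(1))$ gives $S(g)\asymp(g(0)-g(\tfrac12))^2+(g(\tfrac12)-g(1))^2$, with constants depending on $\beta_1$ but not on $g$. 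The coefficient $2^{\beta_1-\alpha_1}$ on the two subcopies is exact, so iterating the identity closes without blow-up: noting that $g\circ f_{w_1}\circ\cdots\circ f_{w_k}|_I$ records the values of $g$ at the endpoints and midpoint of the $w$-th dyadic subinterval of length $2^{-k}$, one finds
$$\Psi(g)\asymp\sum_{k\ge0}2^{(\beta_1-\alpha_1)k}\sum_{l=0}^{2^{k+1}-1}\Bigl(g(\tfrac{l}{2^{k+1}})-g(\tfrac{l+1}{2^{k+1}})\Bigr)^2\asymp\mathcal K(g),$$
so $\|\mathrm{Ext}(g)\|_{B^{2,2}_{\alpha_1,\beta_1}(K)}\lesssim\|g\|_{B^{2,2}_{\alpha_2,\beta_2}(I)}$.

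\textbf{Conclusion.} Since restriction to $I$ is bounded $B^{2,2}_{\alpha_1,\beta_1}(K)\to B^{2,2}_{\alpha_2,\beta_2}(I)$ and $\mathrm{Ext}$ is a bounded right inverse, the trace space of $B^{2,2}_{\alpha_1,\beta_1}(K)$ on $I$ equals $B^{2,2}_{\alpha_2,\beta_2}(I)$. Alternatively one may quote the general trace theorem of \cite{Jon05}: then the only work is to identify the Besov spaces used there with those of the present paper, which is exactly what Theorem~\ref{thm_main} and (\ref{eqn_interval}) provide, the condition $\beta_1-\alpha_1=\beta_2-\alpha_2$ being the translation of Jonsson's codimension condition. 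The step I expect to be the main obstacle is the extension estimate above: one must show that the oscillation of $g$ at a given dyadic scale feeds into $\Psi(g)$ only through a geometrically convergent amount, which requires keeping careful track of the $\Ee_\loc$-harmonic pieces carried by the cells $K_w$ with $w\notin\{0,1\}^{|w|}$ and is precisely where $\beta_1<\beta_1^*$ is used; the passage to discrete seminorms and the boundedness of the trace are comparatively routine.
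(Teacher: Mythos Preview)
Your restriction argument is exactly the paper's: identify the dyadic endpoints $k2^{-n}$ on $I$ as elements of $V_w$ for $w\in\{0,1\}^n\subseteq W_n$, so that each interval increment already appears in $E_{\beta_1}(u,u)$, and conclude $\mathcal K(u|_I)\le E_{\beta_1}(u,u)$ after matching the weights via $\beta_1-\alpha_1=\beta_2-\alpha_2$. This is all the paper does in Section~4; for the converse (surjectivity of the trace) the paper does not give an argument and effectively defers to Jonsson~\cite{Jon05}.

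Your extension construction therefore goes beyond the paper and gives a genuinely different, self-contained route to the surjectivity. The recursive definition of $\mathrm{Ext}(g)$ is consistent at the junction points, and the decomposition
\[
\Psi(g)=S(g)+2^{\beta_1-\alpha_1}\bigl(\Psi(g\circ f_0|_I)+\Psi(g\circ f_1|_I)\bigr)
\]
is correct; the harmonic piece on $K_2$ contributes a geometric series with ratio $2^{\beta_1-\alpha_1}\cdot\tfrac35<1$, which is indeed where $\beta_1<\beta_1^*$ enters. One point to make explicit in a full write-up: the recursion is an identity in $[0,+\infty]$, and to extract the closed formula $\Psi(g)\asymp\mathcal K(g)$ you should either iterate the truncated sums $\Psi_M(g)=\sum_{m\le M}2^{(\beta_1-\alpha_1)m}D_m(\mathrm{Ext}(g))$ and pass to the limit by monotone convergence, or verify directly that the remainder $2^{(\beta_1-\alpha_1)N}\sum_{w\in\{0,1\}^N}\Psi(g\circ f_w|_I)$ vanishes as $N\to\infty$. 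With that caveat, your argument is sound; it has the advantage of producing an explicit linear extension operator, whereas the paper's short proof only establishes the inclusion $u|_I\in B^{2,2}_{\alpha_2,\beta_2}(I)$ and leaves the extension to~\cite{Jon05}.
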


We give here a new short proof of Theorem \ref{thm_trace_main} using Theorem \ref{thm_main}.

Finally, we construct explicitly a sequence of non-local Dirichlet forms with jumping kernels equivalent to $|x-y|^{-\alpha-\beta}$ that converges exactly to local Dirichlet form. We need some notions as follows. For all $n\ge1$, $w=w_1\ldots w_n\in W_n$ and $p\in V_w$, we have $p=P_{w_1\ldots w_nw_{n+1}}$ for some $w_{n+1}\in\myset{0,1,2}$. Let $\gamma\ge1$ be an integer, define
$$K^{(i)}_{p,n}=K_{w_1\ldots w_nw_{n+1}\ldots w_{n+1}},i\ge1,$$
with $\gamma ni$ terms of $w_{n+1}$.

\begin{mythm}\label{thm_jumping_kernel}
For all sequence $\myset{\beta_i}\subseteq(\alpha,\beta^*)$ with $\beta_i\uparrow\beta^*$, there exist positive functions $a_i$ bounded from above and below by positive constants given by
$$a_i=\delta_iC_i+(1-\delta_i),$$
where $\myset{\delta_i}\subseteq(0,1)$ is an arbitrary sequence with $\delta_i\uparrow1$ and
$$C_i(x,y)=\sum_{n=1}^{\Phi(i)}2^{-2\alpha n}\sum_{w\in W_n}\sum_{p,q\in V_w}\frac{1}{\nu(K^{(i)}_{p,n})\nu(K^{(i)}_{q,n})}1_{K^{(i)}_{p,n}}(x)1_{K^{(i)}_{q,n}}(y),$$
where $\Phi:\mathbb{N}\to\mathbb{N}$ is increasing and $(5\cdot2^{-\beta_i}-1)\Phi(i)\ge i$ for all $i\ge1$. Then for all $u\in \mathcal{F}_\loc$, we have
$$\lim_{i\to+\infty}(5\cdot 2^{-\beta_i}-1)\iint_{K\times K\backslash\mathrm{diag}}\frac{a_i(x,y)(u(x)-u(y))^2}{|x-y|^{\alpha+\beta_i}}\nu(\md x)\nu(\md y)=\Ee_\loc(u,u).$$
\end{mythm}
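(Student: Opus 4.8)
The plan is to reduce the statement to Theorem \ref{thm_incre} by showing that, for each fixed $u\in\Ff_\loc$, the weighted double integral with kernel $a_i(x,y)|x-y|^{-\alpha-\beta_i}$ is a genuine ``interpolation'' between the two pieces appearing in $a_i=\delta_iC_i+(1-\delta_i)$. First I would split the integral linearly along $a_i=\delta_iC_i+(1-\delta_i)$ into
$$
(5\cdot2^{-\beta_i}-1)\delta_i\iint\frac{C_i(x,y)(u(x)-u(y))^2}{|x-y|^{\alpha+\beta_i}}\nu(\md x)\nu(\md y)
\;+\;(5\cdot2^{-\beta_i}-1)(1-\delta_i)\Ee_{\beta_i}(u,u).
$$
For the second term, Theorem \ref{thm_main} gives $\Ee_{\beta_i}(u,u)\asymp E_{\beta_i}(u,u)$, and Theorem \ref{thm_incre} gives $(5\cdot2^{-\beta_i}-1)E_{\beta_i}(u,u)\to\Ee_\loc(u,u)$, a bounded sequence; since $1-\delta_i\downarrow0$, this term tends to $0$. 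So the whole burden is on the first term, and I must show
$$
(5\cdot2^{-\beta_i}-1)\iint\frac{C_i(x,y)(u(x)-u(y))^2}{|x-y|^{\alpha+\beta_i}}\nu(\md x)\nu(\md y)\longrightarrow\Ee_\loc(u,u),
$$
after which multiplying by $\delta_i\uparrow1$ changes nothing.

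The key computational step is to evaluate $\iint C_i(x,y)|x-y|^{-\alpha-\beta_i}(u(x)-u(y))^2\,\nu(\md x)\nu(\md y)$ exactly. Substituting the definition of $C_i$, the indicator functions $1_{K^{(i)}_{p,n}}(x)1_{K^{(i)}_{q,n}}(y)$ collapse the $x,y$ integration, but $|x-y|^{-\alpha-\beta_i}$ is not constant on $K^{(i)}_{p,n}\times K^{(i)}_{q,n}$. Here I would use that $K^{(i)}_{p,n}$ is a cell of generation $n+\gamma ni$ shrunk toward the vertex $p$, so it has diameter $\asymp 2^{-(n+\gamma ni)}$ while $|x-y|$ for $x\in K^{(i)}_{p,n}$, $y\in K^{(i)}_{q,n}$ with $p\ne q\in V_w$ is comparable to $|p-q|\asymp 2^{-n}$ — and more precisely converges to $|p-q|$ as $i\to\infty$ (the correction being of relative size $2^{-\gamma ni}$). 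Thus $|x-y|^{-\alpha-\beta_i}\approx|p-q|^{-\alpha-\beta_i}$ with multiplicative error $1+O(2^{-\gamma ni})$ uniformly, and $\iint 1_{K^{(i)}_{p,n}}(x)1_{K^{(i)}_{q,n}}(y)\,\nu(\md x)\nu(\md y)=\nu(K^{(i)}_{p,n})\nu(K^{(i)}_{q,n})$ cancels the normalizing denominators. The remaining sum is therefore
$$
\sum_{n=1}^{\Phi(i)}2^{-2\alpha n}\sum_{w\in W_n}\sum_{p,q\in V_w}\frac{(u(p)-u(q))^2}{|p-q|^{\alpha+\beta_i}}\bigl(1+O(2^{-\gamma ni})\bigr),
$$
and since $|p-q|\asymp 2^{-n}$ for $p,q\in V_w$, $|p-q|^{-\alpha-\beta_i}\asymp 2^{(\alpha+\beta_i)n}$, so the main term is $\asymp\sum_{n=1}^{\Phi(i)}2^{(\beta_i-\alpha)n}\sum_{w\in W_n}\sum_{p,q\in V_w}(u(p)-u(q))^2$, a truncation of $E_{\beta_i}(u,u)$ at level $\Phi(i)$.

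What remains is to control the truncation and pass to the limit. The condition $(5\cdot2^{-\beta_i}-1)\Phi(i)\ge i$ with $\Phi$ increasing is exactly what forces the truncated sum $(5\cdot2^{-\beta_i}-1)\sum_{n=1}^{\Phi(i)}2^{(\beta_i-\alpha)n}(\cdots)$ to capture all of the limit: the tail $\sum_{n>\Phi(i)}2^{(\beta_i-\alpha)n}\sum_{w\in W_n}\sum_{p,q\in V_w}(u(p)-u(q))^2$, multiplied by $5\cdot2^{-\beta_i}-1$, is bounded by $(5\cdot2^{-\beta_i}-1)\sum_{n>\Phi(i)}(5/3)^{-n}\cdot(5/3)^n\sum_{w\in W_n}\sum_{p,q\in V_w}(u(p)-u(q))^2$; using $2^{(\beta_i-\alpha)n}=(5\cdot2^{-\beta_i})^{-n}\cdot 5^n 2^{-\alpha n}\cdot$\dots one sees it decays geometrically in $n$ past $\Phi(i)$ at a rate that, combined with $u\in\Ff_\loc$ (so $(5/3)^n\sum_{w\in W_n}\sum_{p,q}(u(p)-u(q))^2$ is bounded, in fact convergent to $\Ee_\loc(u,u)$), makes the tail vanish because $\Phi(i)\to\infty$. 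Hence the truncated weighted sum has the same limit as $(5\cdot2^{-\beta_i}-1)E_{\beta_i}(u,u)$, which is $\Ee_\loc(u,u)$ by Theorem \ref{thm_incre}. The main obstacle I anticipate is the uniformity in $n$ of the geometric estimate $|x-y|=|p-q|(1+O(2^{-\gamma ni}))$ for $x\in K^{(i)}_{p,n},y\in K^{(i)}_{q,n}$: one must check the implied constant does not depend on $n$ (so that summing the errors $\sum_n 2^{(\beta_i-\alpha)n}2^{-\gamma ni}(\cdots)$ against the energy stays negligible), which uses that $\mathrm{dist}(K_w,K_{w'})\asymp 2^{-n}$ uniformly over distinct adjacent or non-adjacent cells $K_w,K_{w'}$ of generation $n$ and that shrinking toward a vertex only decreases diameter — together with choosing $\gamma$ (or just $\gamma=1$) large enough that $2^{-\gamma ni}$ beats $2^{(\beta_i-\alpha)n}$ summed against a convergent series. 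A secondary point is handling pairs $p=q$ (the diagonal contribution to $C_i$), where $K^{(i)}_{p,n}=K^{(i)}_{q,n}$ and $|x-y|$ can be as small as $0$; but for $p=q$ the term $(u(p)-u(q))^2=0$, so these contribute nothing and can be dropped from the outset, matching the ``$K\times K\setminus\mathrm{diag}$'' domain of integration.
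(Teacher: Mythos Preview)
Your overall architecture matches the paper's: split linearly along $a_i=\delta_iC_i+(1-\delta_i)$, kill the $(1-\delta_i)$ piece via Theorems~\ref{thm_main} and~\ref{thm_incre}, and reduce the $C_i$ piece to a truncation of $E_{\beta_i}$ at level $\Phi(i)$. The truncation argument and the role of the hypothesis $(5\cdot2^{-\beta_i}-1)\Phi(i)\ge i$ are also essentially what the paper does in Proposition~\ref{prop_kernel2}.

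There is, however, a genuine gap in your ``key computational step''. You write that once $|x-y|^{-\alpha-\beta_i}$ is replaced by $|p-q|^{-\alpha-\beta_i}(1+O(2^{-\gamma ni}))$, ``the remaining sum is therefore $\sum_n 2^{-2\alpha n}\sum_{w,p,q}(u(p)-u(q))^2/|p-q|^{\alpha+\beta_i}(1+O(2^{-\gamma ni}))$''. But approximating the denominator alone leaves you with the cell \emph{average}
\[
\frac{1}{\nu(K^{(i)}_{p,n})\nu(K^{(i)}_{q,n})}\int_{K^{(i)}_{p,n}}\int_{K^{(i)}_{q,n}}(u(x)-u(y))^2\,\nu(\md x)\nu(\md y),
\]
not $(u(p)-u(q))^2$. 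Replacing this average by the vertex value is not a geometric fact; it requires quantitative regularity of $u$, uniformly across all scales $n\le\Phi(i)$ as $\Phi(i)\to\infty$. The paper supplies this via the H\"older embedding of Lemma~\ref{lem_holder}: from $|u(x)-u(y)|^2\le cE(u)|x-y|^{\beta-\alpha}$ one gets
\[
\bigl|(u(x)-u(y))^2-(u(p)-u(q))^2\bigr|\le 4cE(u)\,2^{-\frac{\beta-\alpha}{2}(2n+\gamma ni)},
\]
which after summing in $n$ contributes the error term $2^{\alpha-\frac{\beta-\alpha}{2}\gamma i}/(1-2^{\alpha-\frac{\beta-\alpha}{2}\gamma i})$ in Proposition~\ref{prop_kernel1}. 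This also explains why $\gamma$ must be chosen so that $\alpha-\frac{\beta-\alpha}{2}\gamma<0$ --- your parenthetical ``or just $\gamma=1$'' need not suffice. Without this H\"older step your error analysis is incomplete.

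A smaller point: your handling of $p=q$ is circular. You argue these terms vanish because $(u(p)-u(q))^2=0$, but the actual contribution before any approximation is the integral of $(u(x)-u(y))^2/|x-y|^{\alpha+\beta_i}$ over $K^{(i)}_{p,n}\times K^{(i)}_{p,n}$, which is not zero. The clean resolution is to read $\sum_{p,q\in V_w}$ as a sum over ordered pairs with $p\ne q$ from the outset (harmless for $E_\beta$ since diagonal terms vanish there anyway), rather than to invoke a conclusion you have not yet established.
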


\begin{myrmk}
The shape of function $C_i$ reflects the inhomogeneity of fractal structure with respect to Euclidean structure. Of course, subordination technique in \cite{Pie08} ensures the existence of functions $a_i$, but Theorem \ref{thm_jumping_kernel} provides them explicitly.
\end{myrmk}

\section{Proof of Theorem \ref{thm_main}}

First, we give other equivalent semi-norms which are more convenient for later use.
\begin{mylem}\label{lem_equiv1}
For all $u\in L^2(K;\nu)$, we have
$$\int_K\int_K\frac{(u(x)-u(y))^2}{|x-y|^{\alpha+\beta}}\nu(\md x)\nu(\md y)\asymp\sum_{n=0}^\infty2^{(\alpha+\beta)n}\int_K\int_{B(x,2^{-n})}(u(x)-u(y))^2\nu(\md y)\nu(\md x).$$
\end{mylem}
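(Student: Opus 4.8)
The plan is to split both sides into dyadic annuli and compare term by term. On the left, decompose $K\times K$ minus the diagonal into the sets $A_n = \{(x,y): 2^{-n-1}\le |x-y| < 2^{-n}\}$ for $n\ge 0$ (note $K$ has diameter $1$, so no larger scales occur). On $A_n$ one has $|x-y|^{-\alpha-\beta}\asymp 2^{(\alpha+\beta)n}$, hence
$$\int_K\int_K\frac{(u(x)-u(y))^2}{|x-y|^{\alpha+\beta}}\,\nu(\md x)\nu(\md y)\asymp\sum_{n=0}^\infty 2^{(\alpha+\beta)n}\int\!\!\int_{A_n}(u(x)-u(y))^2\,\nu(\md y)\nu(\md x).$$
So it suffices to compare $\sum_n 2^{(\alpha+\beta)n}\,I_n$ with $\sum_n 2^{(\alpha+\beta)n}\,J_n$, where $I_n$ is the integral over the annulus $A_n$ and $J_n = \int_K\int_{B(x,2^{-n})}(u(x)-u(y))^2\,\nu(\md y)\nu(\md x)$ is the integral over the ball. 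Clearly $J_n = \sum_{m\ge n} I_m$, so the right-hand side is $\sum_n 2^{(\alpha+\beta)n}\sum_{m\ge n} I_m$.

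The inequality $\lesssim$ (LHS bounds RHS) is immediate since $I_n \le J_n$ termwise. For the reverse, interchange the order of summation:
$$\sum_{n=0}^\infty 2^{(\alpha+\beta)n}\sum_{m\ge n} I_m = \sum_{m=0}^\infty I_m \sum_{n=0}^m 2^{(\alpha+\beta)n} \le \sum_{m=0}^\infty I_m \cdot \frac{2^{(\alpha+\beta)(m+1)}}{2^{\alpha+\beta}-1} = \frac{2^{\alpha+\beta}}{2^{\alpha+\beta}-1}\sum_{m=0}^\infty 2^{(\alpha+\beta)m} I_m,$$
using that the geometric sum $\sum_{n=0}^m 2^{(\alpha+\beta)n}$ is dominated by its last term up to a constant depending only on $\alpha+\beta$. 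This gives the matching upper bound, completing the equivalence. I would also remark that the argument uses only that $\nu$ is a finite measure on a bounded set and the value $\alpha+\beta>0$; in particular no volume regularity of $\nu$ is needed here, that being deferred to the harder comparison with $E_\beta$ in the next step of the proof of Theorem \ref{thm_main}.

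The only mild subtlety — hardly an obstacle — is handling the endpoints of the annuli $A_n$ consistently (whether balls are open or closed) and confirming that all $(x,y)$ with $x\ne y$ are covered exactly once; since the diagonal is $\nu\times\nu$-null on the relevant product and each such pair lies in a unique $A_n$, this causes no difficulty. The genuinely delicate comparisons in this paper lie elsewhere, namely in relating the ball-average semi-norm to the vertex-sum semi-norm $E_\beta$, where one must use the self-similar structure of $K$ and the fact that $\nu(K_w)\asymp 3^{-|w|}$ together with $|x-y|\asymp 2^{-|w|}$ for $x,y$ in adjacent cells $K_w$.
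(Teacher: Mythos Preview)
Your proof is correct and follows essentially the same dyadic-annulus decomposition as the paper: both split into annuli $\{2^{-n-1}\le|x-y|<2^{-n}\}$, use $I_n\le J_n$ for one direction, and interchange summation plus a geometric series for the other. One small wording slip: where you write ``$\lesssim$ (LHS bounds RHS)'' you mean that the LHS is bounded \emph{by} the RHS; the mathematics is right.
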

\begin{proof}
On the one hand
$$
\begin{aligned}
&\int_K\int_K\frac{(u(x)-u(y))^2}{|x-y|^{\alpha+\beta}}\nu(\md x)\nu(\md y)\\
&=\int_K\int_{B(x,1)}\frac{(u(x)-u(y))^2}{|x-y|^{\alpha+\beta}}\nu(\md y)\nu(\md x)\\
&=\sum_{n=0}^\infty\int_K\int_{B(x,2^{-n})\backslash{B(x,2^{-(n+1)})}}\frac{(u(x)-u(y))^2}{|x-y|^{\alpha+\beta}}\nu(\md y)\nu(\md x)\\
&\le\sum_{n=0}^\infty2^{(\alpha+\beta)(n+1)}\int_K\int_{B(x,2^{-n})\backslash{B(x,2^{-(n+1)})}}{(u(x)-u(y))^2}\nu(\md y)\nu(\md x)\\
&\le2^{\alpha+\beta}\sum_{n=0}^\infty2^{(\alpha+\beta)n}\int_K\int_{B(x,2^{-n})}{(u(x)-u(y))^2}\nu(\md y)\nu(\md x).
\end{aligned}
$$
On the other hand
$$
\begin{aligned}
&\sum_{n=0}^\infty2^{(\alpha+\beta)n}\int_K\int_{B(x,2^{-n})}{(u(x)-u(y))^2}\nu(\md y)\nu(\md x)\\
&=\sum_{n=0}^\infty\sum_{k=n}^\infty2^{(\alpha+\beta)n}\int_K\int_{B(x,2^{-k})\backslash{B(x,2^{-(k+1)})}}{(u(x)-u(y))^2}\nu(\md y)\nu(\md x)\\
&=\sum_{k=0}^\infty\sum_{n=0}^k 2^{(\alpha+\beta)n}\int_K\int_{B(x,2^{-k})\backslash{B(x,2^{-(k+1)})}}{(u(x)-u(y))^2}\nu(\md y)\nu(\md x)\\
&\le\sum_{k=0}^\infty\frac{2^{(\alpha+\beta)(k+1)}}{2^{\alpha+\beta}-1}\int_K\int_{B(x,2^{-k})\backslash{B(x,2^{-(k+1)})}}{(u(x)-u(y))^2}\nu(\md y)\nu(\md x)\\
&\le\frac{2^{\alpha+\beta}}{2^{\alpha+\beta}-1}\sum_{k=0}^\infty\int_K\int_{B(x,2^{-k})\backslash{B(x,2^{-(k+1)})}}\frac{(u(x)-u(y))^2}{|x-y|^{\alpha+\beta}}\nu(\md y)\nu(\md x)\\
&=\frac{2^{\alpha+\beta}}{2^{\alpha+\beta}-1}\int_K\int_{K}\frac{(u(x)-u(y))^2}{|x-y|^{\alpha+\beta}}\nu(\md x)\nu(\md y).
\end{aligned}
$$
\end{proof}

Moreover, we have

\begin{mycor}\label{cor_arbi}
Fix arbitrary integer $N\ge0$ and real number $c>0$. For all $u\in L^2(K;\nu)$, we have
$$\int_K\int_K\frac{(u(x)-u(y))^2}{|x-y|^{\alpha+\beta}}\nu(\md x)\nu(\md y)\asymp\sum_{n=N}^\infty2^{(\alpha+\beta)n}\int_K\int_{B(x,c2^{-n})}(u(x)-u(y))^2\nu(\md y)\nu(\md x).$$
\end{mycor}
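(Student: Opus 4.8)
The plan is to bootstrap everything from Lemma~\ref{lem_equiv1}, which is precisely the case $N=0$, $c=1$, and then to track the effect of the two elementary modifications: replacing the radius $2^{-n}$ by $c\,2^{-n}$, and discarding the first $N$ terms of the sum. Throughout I would write $G(r):=\int_K\int_{B(x,r)}(u(x)-u(y))^2\,\nu(\md y)\,\nu(\md x)$. This $G$ is nondecreasing in $r$, it is constant and equal to $G(\infty):=\int_K\int_K(u(x)-u(y))^2\,\nu\,\nu$ once $r>1=\mathrm{diam}\,K$, and it tends to $0$ as $r\downarrow 0$; also $\Ee_\beta(u,u)=\int_K\int_K(u(x)-u(y))^2|x-y|^{-\alpha-\beta}\nu\,\nu$ is exactly the left-hand side of the Corollary. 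By Lemma~\ref{lem_equiv1}, $\Ee_\beta(u,u)\asymp\sum_{n\ge0}2^{(\alpha+\beta)n}G(2^{-n})$, and in the Corollary the implied constants are allowed to depend on the now-fixed $N$ and $c$ (and on $\alpha,\beta$).

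First I would absorb $c$ by an index shift: pick integers $a\le b$ with $2^{-b}\le c<2^{-a}$, so that $B(x,2^{-(n+b)})\subseteq B(x,c2^{-n})\subseteq B(x,2^{-(n+a)})$, hence $G(2^{-(n+b)})\le G(c2^{-n})\le G(2^{-(n+a)})$; summing against $2^{(\alpha+\beta)n}$ and relabelling the index shows $\sum_{n\ge N}2^{(\alpha+\beta)n}G(c2^{-n})$ is squeezed between fixed multiples of $\sum_{m\ge M}2^{(\alpha+\beta)m}G(2^{-m})$ with $M=N+a$ and $M=N+b$. So it suffices to prove, for every fixed integer $M$, that $\sum_{m\ge M}2^{(\alpha+\beta)m}G(2^{-m})\asymp\sum_{m\ge0}2^{(\alpha+\beta)m}G(2^{-m})$ (the right side being $\asymp\Ee_\beta(u,u)$ by Lemma~\ref{lem_equiv1}). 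If $M\le0$ this is trivial: the left sum dominates, and exceeds the right only by the finitely many terms with $M\le m<0$, each equal to $2^{(\alpha+\beta)m}G(\infty)\le G(\infty)=G(1)$, i.e.\ at most the $m=0$ term of the right side. If $M\ge1$ the right sum dominates, and the extra terms $\sum_{m=0}^{M-1}2^{(\alpha+\beta)m}G(2^{-m})$ are at most $C_{M,\alpha,\beta}\,G(\infty)$ because $G$ is nondecreasing. Since $\sum_{m\ge M}2^{(\alpha+\beta)m}G(2^{-m})\ge G(2^{-M})$, the whole statement reduces to the single estimate
$$
G(\infty)\ \le\ C_M\,G(2^{-M}),\qquad\text{i.e.}\qquad\int_K\int_K(u(x)-u(y))^2\,\nu\,\nu\ \le\ C_M\int_K\int_{B(x,2^{-M})}(u(x)-u(y))^2\,\nu\,\nu .
$$

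This Poincaré-type inequality — global oscillation controlled by oscillation at the single fixed scale $2^{-M}$ — is the heart of the matter, and the only place where the connectedness of the gasket really enters; the constant genuinely must blow up with $M$ (take $u=\indi_{K_0}$, whose left side is a positive constant while the right side is $\asymp 2^{-2\alpha M}$). To prove it I would fix a generation $m_0$ so large that $2^{-m_0+1}<2^{-M}$, so that every cell $K_w$ ($w\in W_{m_0}$) and every union $K_w\cup K_{w'}$ of two such cells has Euclidean diameter $<2^{-M}$. Writing $\langle u\rangle_w$ for the $\nu$-average of $u$ on $K_w$, using $(u(x)-u(y))^2\le 3[(u(x)-\langle u\rangle_w)^2+(\langle u\rangle_w-\langle u\rangle_{w'})^2+(\langle u\rangle_{w'}-u(y))^2]$ for $x\in K_w$, $y\in K_{w'}$, integrating over $K_w\times K_{w'}$, summing over $w,w'\in W_{m_0}$, and invoking $\int_A(u-\langle u\rangle_A)^2\,d\nu=\tfrac{1}{2\nu(A)}\int_A\int_A(u(x)-u(y))^2\,d\nu\,d\nu$, one bounds $G(\infty)$ by a constant (depending on $m_0$) times $G(2^{-M})$: the diagonal terms directly, since $K_w\subseteq B(x,2^{-M})$ for $x\in K_w$; and the cross terms $\sum_{w,w'}\nu(K_w)\nu(K_{w'})(\langle u\rangle_w-\langle u\rangle_{w'})^2$ via the fact that the level-$m_0$ cell-adjacency graph is connected (as $K$ is connected) and finite — so each $(\langle u\rangle_w-\langle u\rangle_{w'})^2$ is controlled, along a chain of adjacent cells and by Cauchy--Schwarz, by a constant multiple of $\max_{K_v\cap K_{v'}\neq\emptyset}(\langle u\rangle_v-\langle u\rangle_{v'})^2$, and for adjacent $v,v'$ this last quantity is again $\le\,$const$\,\cdot\int_{K_v\cup K_{v'}}\int_{K_v\cup K_{v'}}(u(x)-u(y))^2\le\,$const$\,\cdot G(2^{-M})$ by the same variance identity applied to $A=K_v\cup K_{v'}$. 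Since $\sum_{w,w'}\nu(K_w)\nu(K_{w'})=1$, this closes the estimate, and only $u\in L^2(K;\nu)$ is used.

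The remaining points are routine bookkeeping: the precise choice of $a,b$; the split into the cases $M\le0$ and $M\ge1$; reindexed lower limits that turn out negative (harmless, as the corresponding $G$-values all equal $G(\infty)$); and the fact that $\{(x,y):|x-y|=1\}$ is $\nu\otimes\nu$-null so that $G(1)=G(\infty)$ (in $K$ only $x\in\{p_0,p_1,p_2\}$ has any point at distance $1$). The genuine work, and the step I expect to cost the most effort to write carefully, is the connectedness-plus-chaining proof of $G(\infty)\le C_M\,G(2^{-M})$.
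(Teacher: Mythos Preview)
Your reduction is identical to the paper's: both recognize that the whole statement boils down to the single-scale Poincar\'e inequality
\[
\int_K\int_K(u(x)-u(y))^2\,\nu(\md x)\,\nu(\md y)\ \le\ C_M\int_K\int_{B(x,2^{-M})}(u(x)-u(y))^2\,\nu(\md y)\,\nu(\md x),
\]
and your bookkeeping (the $c\mapsto 2^{-a},2^{-b}$ squeeze, the split $M\le0$ vs.\ $M\ge1$) is correct.

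The genuine difference is how you establish that inequality. The paper invokes the \emph{chain condition} of the gasket: for any $x,y\in K$ and any $N$, there exist $z_0=x,\dots,z_N=y$ with $|z_i-z_{i+1}|\le C_1|x-y|/N$; choosing $N$ large enough that each step is $<2^{-(M+2)}$, it telescopes $(u(x)-u(y))^2$, averages each factor over the ball $B(z_i,2^{-(M+2)})$, and integrates in $x,y$. You instead use the \emph{cell structure}: decompose at level $m_0\gg M$, compare cell averages $\langle u\rangle_w$, and chain along the finite connected cell-adjacency graph, bounding adjacent differences by the variance identity on $K_v\cup K_{v'}$. Both arguments are valid and standard; yours uses only connectedness of $K$ and the self-similar partition, while the paper's is purely metric (chain condition plus Ahlfors regularity $\nu(B(z,2^{-M}))\asymp 2^{-\alpha M}$) and so transfers verbatim to any metric measure space with those properties. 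Conversely, your argument would work on self-similar fractals lacking a clean chain condition, provided the cell-adjacency graph is connected. Neither proof is shorter in practice; your constants depend on $m_0$ through the diameter of the level-$m_0$ adjacency graph, the paper's through the chain length $N$.
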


\begin{proof}
We only need to show that for all $n\ge1$, there exists some positive constant $C=C(n)$ such that
$$\int_K\int_K(u(x)-u(y))^2\nu(\md x)\nu(\md y)\le C\int_K\int_{B(x,{2^{-n}})}(u(x)-u(y))^2\nu(\md y)\nu(\md x).$$
Indeed, since SG satisfies the chain condition, see \cite[Definition 3.4]{GHL03}, that is, there exists a positive constant $C_1$ such that for all $x,y\in K$, for all integer $N\ge1$ there exist $z_0,\ldots,z_N\in K$ with $z_0=x,z_N=y$ and
$$|z_i-z_{i+1}|\le C_1\frac{|x-y|}{N}\text{ for all }i=0,\ldots,N-1.$$
Take integer $N\ge2^{n+2}C_1+1$. Fix $x,y\in K$, there exist $z_0,\ldots,z_N$ with $z_0=x,z_N=y$ and
$$|z_i-z_{i+1}|\le C_1\frac{|x-y|}{N}\le 2^{-(n+2)}\text{ for all }i=0,\ldots,N-1.$$
For all $i=0,\ldots,N-1$, for all $x_i\in B(z_i,2^{-(n+2)})$, $x_{i+1}\in B(z_{i+1},2^{-(n+2)})$, we have
$$|x_i-x_{i+1}|\le|x_i-z_i|+|z_i-z_{i+1}|+|z_{i+1}-x_{i+1}|\le{3}\cdot{2^{-(n+2)}}<2^{-n}.$$
Fix $x_0=z_0=x$, $x_N=z_N=y$, note that
$$(u(x)-u(y))^2=(u(x_0)-u(x_N))^2\le N\sum_{i=0}^{N-1}(u(x_i)-u(x_{i+1}))^2.$$
Integrating with respect to $x_1\in B(z_1,2^{-(n+2)}),\ldots,x_{N-1}\in B(z_{N-1},2^{-(n+2)})$ and dividing by $\nu(B(z_1,2^{-(n+2)})),\ldots,\nu(B(z_{N-1},2^{-(n+2)}))$, we have
$$
\begin{aligned}
(u(x)-u(y))^2&\le N\left(\frac{1}{\nu(B(z_1,2^{-(n+2)}))}\int_{B(z_1,2^{-(n+2)})}(u(x_0)-u(x_1))^2\nu(\md x_1)\right.\\
&+\frac{1}{\nu(B(z_{N-1},2^{-(n+2)}))}\int_{B(z_{N-1},2^{-(n+2)})}(u(x_{N-1})-u(x_N))^2\nu(\md x_{N-1})\\
&+\sum_{i=1}^{N-2}\frac{1}{\nu(B(z_i,2^{-(n+2)}))\nu(B(z_{i+1},2^{-(n+2)}))}\\
&\left.\int_{B(z_i,2^{-(n+2)})}\int_{B(z_{i+1},2^{-(n+2)})}(u(x_i)-u(x_{i+1}))^2\nu(\md x_i)\nu(\md x_{i+1})\right).
\end{aligned}
$$
Noting that $\nu(B(z_i,2^{-(n+2)}))\asymp 2^{-\alpha n}$ for all $i=1,\ldots,N-1$, we have
$$
\begin{aligned}
(u(x)-u(y))^2&\le C_2\left(\int_{B(x,2^{-n})}(u(x)-u(x_1))^2\nu(\md x_1)\right.\\
&+\int_{B(y,2^{-n})}(u(y)-u(x_{N-1}))^2\nu(\md x_{N-1})\\
&\left.+\int_K\int_{B(x,2^{-n})}(u(x)-u(y))^2\nu(\md y)\nu(\md x)\right),
\end{aligned}
$$
where $C_2=C_2(n)$ is some positive constant. Since $\nu(K)=1$, integrating with respect to $x,y\in K$, we have
$$\int_K\int_K(u(x)-u(y))^2\nu(\md x)\nu(\md y)\le 4C_2\int_K\int_{B(x,{2^{-n}})}(u(x)-u(y))^2\nu(\md y)\nu(\md x).$$
Letting $C=4C_2$, then we have desired result.
\end{proof}

The following result states that a Besov space can be embedded in some H\"older space.
\begin{mylem}\label{lem_holder}(\cite[Theorem 4.11 (iii)]{GHL03})
Let $u\in C(K)$ and
$$E(u):=\int_K\int_K\frac{(u(x)-u(y))^2}{|x-y|^{\alpha+\beta}}\nu(\md x)\nu(\md y),$$
then
$$|u(x)-u(y)|^2\le cE(u)|x-y|^{\beta-\alpha}\text{ for all }x,y\in K,$$
where $c$ is some positive constant.
\end{mylem}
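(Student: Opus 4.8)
The plan is to prove this Besov--H\"older embedding by a Morrey--Campanato-type chaining argument resting on the Ahlfors regularity of $\nu$ on the Sierpi\'nski gasket, namely $\nu(B(x,\rho))\asymp\rho^\alpha$ for all $x\in K$ and $0<\rho\le2$ (with the convention that $B(x,\rho)=K$ once $\rho$ exceeds the diameter, which is $1$). For a ball $B$ I write $u_B=\nu(B)^{-1}\int_Bu\,\nu(\md z)$ for the average of $u$ over $B$. Fix $x,y\in K$ and set $r=|x-y|\le1$. Since $u\in C(K)$, the averages satisfy $u_{B(x,\rho)}\to u(x)$ as $\rho\downarrow0$, so the whole problem reduces to controlling the increments of these averages along a dyadic family of shrinking balls.

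First I would record the basic per-scale estimate. For the nested balls $B_k=B(x,2^{1-k}r)$, $k\ge0$, two applications of the Cauchy--Schwarz (Jensen) inequality give
$$|u_{B_k}-u_{B_{k+1}}|^2\le\frac{1}{\nu(B_{k+1})\nu(B_k)}\int_{B_{k+1}}\int_{B_k}(u(z)-u(w))^2\,\nu(\md w)\,\nu(\md z).$$
Enlarging the outer domain from $B_{k+1}$ to $B_k$, bounding $|z-w|\le 2\cdot2^{1-k}r$ on $B_k\times B_k$ (so that $(u(z)-u(w))^2\le(2^{2-k}r)^{\alpha+\beta}|z-w|^{-(\alpha+\beta)}(u(z)-u(w))^2$), using $\int_{B_k}\int_{B_k}|z-w|^{-(\alpha+\beta)}(u(z)-u(w))^2\le E(u)$, and finally $\nu(B_k)\asymp\nu(B_{k+1})\asymp2^{-k\alpha}r^\alpha$, I obtain
$$|u_{B_k}-u_{B_{k+1}}|^2\le C\,2^{k(\alpha-\beta)}r^{\beta-\alpha}E(u).$$

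The decisive point is that $\beta>\alpha$ makes the exponent $\alpha-\beta$ negative, so after taking square roots the geometric series in $k$ converges and telescoping yields
$$|u(x)-u_{B_0}|\le\sum_{k=0}^\infty|u_{B_k}-u_{B_{k+1}}|\le C\,r^{(\beta-\alpha)/2}E(u)^{1/2},$$
and symmetrically $|u(y)-u_{B_0'}|\le C\,r^{(\beta-\alpha)/2}E(u)^{1/2}$ for $B_k'=B(y,2^{1-k}r)$. It then remains to compare the two top averages $u_{B_0}$ and $u_{B_0'}$. Since $B_0=B(x,2r)$ and $B_0'=B(y,2r)$ both lie inside the common ball $B^\ast=B(x,3r)$, and all three balls have comparable measure $\asymp r^\alpha$, the same per-scale computation applied to the pairs $(B_0,B^\ast)$ and $(B_0',B^\ast)$ bounds $|u_{B_0}-u_{B_0'}|$ by $C\,r^{(\beta-\alpha)/2}E(u)^{1/2}$. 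Combining the three pieces through the triangle inequality and squaring gives $|u(x)-u(y)|^2\le c\,E(u)|x-y|^{\beta-\alpha}$, as claimed.

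I expect the only genuinely delicate point to be the interplay between the negative exponent and the geometry: the summability of $\sum_k2^{k(\alpha-\beta)/2}$ is exactly what forces the hypothesis $\beta>\alpha$ and pins down the H\"older exponent $(\beta-\alpha)/2$, so this is where the assumption must be used honestly. Everything else---the two Jensen steps, the diameter bound on $B_k\times B_k$, and the volume comparisons---is routine once Ahlfors regularity is available; the cross-center comparison through $B^\ast$ is the one spot requiring a little care to keep all balls of comparable radius and measure, but it introduces no idea beyond the single-center estimate.
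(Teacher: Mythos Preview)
Your argument is correct. Note, however, that the paper does not give its own proof of this lemma: it simply quotes \cite[Theorem 4.11 (iii)]{GHL03} and remarks that the proof there does not use heat kernel estimates. The standard proof behind that citation is exactly the Morrey--Campanato/telescoping-of-averages argument you wrote, resting on Ahlfors regularity $\nu(B(x,\rho))\asymp\rho^\alpha$ and the hypothesis $\beta>\alpha$ to make $\sum_k 2^{k(\alpha-\beta)/2}$ converge; so your proposal matches the intended route.
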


Note that the proof of above lemma does not rely on heat kernel.

We divide Theorem \ref{thm_main} into the following Theorem \ref{thm_equiv2_1} and Theorem \ref{thm_equiv2_2}. The idea of the proofs of these theorems comes from \cite{Jon96} where the case of local Dirichlet form was considered.

\begin{mythm}\label{thm_equiv2_1}
For all $u\in C(K)$, we have
$$\sum_{n=1}^\infty2^{(\beta-\alpha)n}\sum_{w\in W_n}\sum_{p,q\in V_w}(u(p)-u(q))^2\lesssim\int_K\int_K\frac{(u(x)-u(y))^2}{|x-y|^{\alpha+\beta}}\nu(\md x)\nu(\md y).$$
\end{mythm}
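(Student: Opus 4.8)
The plan is to bound each vertex difference $(u(p)-u(q))^2$, $p,q\in V_w$, $w\in W_n$, by a telescoping chain of cell averages that collapses onto the vertex, and then to reorganize the resulting triple sum according to the scale of the cells that occur. Write $\mybar u_w:=\nu(K_w)^{-1}\int_{K_w}u\,\md\nu$. Since for each $w\in W_n$ one has $\sum_{p,q\in V_w}(u(p)-u(q))^2\lesssim\sum_{p\in V_w}(u(p)-\mybar u_w)^2$ (write $u(p)-u(q)=(u(p)-\mybar u_w)-(u(q)-\mybar u_w)$ and use $|V_w|=3$), it suffices to bound $S:=\sum_{n\ge1}2^{(\beta-\alpha)n}\sum_{w\in W_n}\sum_{p\in V_w}(u(p)-\mybar u_w)^2$ by $\Ee_\beta(u,u)$.

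Fix $w\in W_n$ and $p\in V_w$. Then $p=P_{wv}$ for a unique $v\in\myset{0,1,2}$, and since $p_v$ is the fixed point of $f_v$, the cells $K^{(0)}_p:=K_w$ and $K^{(j)}_p:=K_{wv\cdots v}$ ($j\ge1$ copies of $v$) form a decreasing sequence of level-$(n+j)$ cells, all containing $p$, with $\operatorname{diam}K^{(j)}_p=2^{-(n+j)}\to0$. By continuity of $u$, $\mybar u_{K^{(j)}_p}\to u(p)$, so the partial sums telescope to $u(p)-\mybar u_w=\sum_{j\ge0}\bigl(\mybar u_{K^{(j+1)}_p}-\mybar u_{K^{(j)}_p}\bigr)$. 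For each consecutive pair, writing the difference of averages as a double average, Cauchy--Schwarz together with $K^{(j+1)}_p\subset K^{(j)}_p$ and $\nu(K_w)=3^{-|w|}=2^{-\alpha|w|}$ give
$$\bigl(\mybar u_{K^{(j+1)}_p}-\mybar u_{K^{(j)}_p}\bigr)^2\le\frac{1}{\nu(K^{(j)}_p)\,\nu(K^{(j+1)}_p)}\int_{K^{(j)}_p}\!\int_{K^{(j)}_p}(u(x)-u(y))^2\,\nu(\md x)\nu(\md y)\lesssim2^{2\alpha(n+j)}I_j(p),$$
with $I_j(p):=\int_{K^{(j)}_p}\int_{K^{(j)}_p}(u(x)-u(y))^2\,\nu(\md x)\nu(\md y)$. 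Fixing $\veps\in(0,(\beta-\alpha)/2)$ and applying Cauchy--Schwarz to the telescoping series with weights $2^{\pm\veps j}$ yields
$$(u(p)-\mybar u_w)^2\lesssim\sum_{j\ge0}2^{2\veps j}\,2^{2\alpha(n+j)}I_j(p).$$

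Insert this into $S$ and set $m=n+j$; then $I_j(p)$ is an integral over the level-$m$ cell $Q=K^{(j)}_p$. The combinatorial core is that, for a fixed $m$ and a fixed $Q\in W_m$, the tuples $(n,w,p,j)$ with $n\ge1$, $n+j=m$, $K^{(j)}_p=Q$ are few with rapidly decaying weights: for $j=0$ they are exactly the $3$ vertices of $Q$, while for each $j\ge1$ there is at most one, existing only when the last $j$ letters of the word of $Q$ agree. Hence
$$\sum_{\substack{(n,w,p,j):\,n+j=m\\ K^{(j)}_p=Q}}2^{(\beta-\alpha)n}2^{2\veps j}\le2^{(\beta-\alpha)m}\Bigl(3+\sum_{j\ge1}2^{-((\beta-\alpha)-2\veps)j}\Bigr)\lesssim2^{(\beta-\alpha)m}.$$
Since $2\alpha+(\beta-\alpha)=\alpha+\beta$, summing over $Q\in W_m$, using that the level-$m$ cells cover $K$ with pairwise $\nu$-null overlaps and that $\operatorname{diam}K_w=2^{-m}$ for $w\in W_m$, we obtain
$$S\lesssim\sum_{m\ge1}2^{(\alpha+\beta)m}\int_K\!\int_{B(x,2\cdot2^{-m})}(u(x)-u(y))^2\,\nu(\md y)\nu(\md x)\asymp\Ee_\beta(u,u),$$
the last step by Corollary \ref{cor_arbi}. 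All manipulations are on non-negative quantities, so the rearrangements are justified by Tonelli's theorem; the only non-trivial limit, the telescoping identity, holds pointwise by uniform continuity of $u$ (alternatively, Lemma \ref{lem_holder} gives absolute convergence of the series).

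The step I expect to be the main obstacle is the reorganization: one must see not only that each small cell is ``charged'' by at most $O(m)$ telescoping increments, but, crucially, that the weights $2^{(\beta-\alpha)n}2^{2\veps j}$ attached to them sum to only $O(2^{(\beta-\alpha)m})$ --- this is exactly where $\beta>\alpha$ is used, via the admissible range of $\veps$ --- so that no spurious power of $2^m$ survives and the bound matches the expression $\sum_m2^{(\alpha+\beta)m}\int_K\int_{B(x,c2^{-m})}(u(x)-u(y))^2$ controlled by Lemma \ref{lem_equiv1}.
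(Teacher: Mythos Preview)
Your argument is correct and takes a genuinely different route from the paper's. The paper also telescopes from each vertex $p\in V_w$ through nested sub-cells, but it compares $u(p)$ to $u(x)$ for arbitrary $x\in K_w$ (rather than to the cell average $\mybar u_w$), iterates the crude bound $(a+b)^2\le 2a^2+2b^2$ so that the $j$-th increment carries weight $2^j$, and compensates for this growth by jumping $k$ levels at a time; the telescoping is truncated after $l$ steps and the tail $(u(p)-u(x^{(l)}))^2$ is killed using the H\"older embedding of Lemma~\ref{lem_holder}. Finally the paper chooses $l=n$ and $k$ large enough that two auxiliary geometric series converge.

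Your version replaces all of this by cell averages and an \emph{infinite} telescope, where continuity of $u$ alone justifies the limit; the use of Cauchy--Schwarz with adjustable weights $2^{\pm\veps j}$ replaces the paper's step-size parameter $k$, and the reorganization by the total level $m=n+j$ together with the counting ``for $j=0$ three tuples, for $j\ge1$ at most one'' is exactly the right bookkeeping. The payoff is that you avoid Lemma~\ref{lem_holder} entirely and have no parameters to tune, at the cost of a slightly less explicit implied constant. The place where $\beta>\alpha$ enters --- the admissible range $0<\veps<(\beta-\alpha)/2$ --- plays the same role as the paper's conditions $\beta-(\beta-\alpha)(k+1)<0$ and $1-(\beta-\alpha)k<0$.
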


\begin{proof}
First fix $n\ge1$ and $w=w_1\ldots w_n\in W_n$, consider $\sum_{p,q\in V_w}(u(p)-u(q))^2$. For all $x\in K_w$, we have
$$(u(p)-u(q))^2\le 2(u(p)-u(x))^2+2(u(x)-u(q))^2.$$
Integrating with respect to $x\in K_w$ and dividing by $\nu(K_w)$, we have
$$(u(p)-u(q))^2\le\frac{2}{\nu(K_w)}\int_{K_w}(u(p)-u(x))^2\nu(\md x)+\frac{2}{\nu(K_w)}\int_{K_w}(u(q)-u(x))^2\nu(\md x),$$
hence
$$
\begin{aligned}
&\sum_{p,q\in V_w}(u(p)-u(q))^2\\
&\le\sum_{p,q\in V_w,p\ne q}\left[\frac{2}{\nu(K_w)}\int_{K_w}(u(p)-u(x))^2\nu(\md x)+\frac{2}{\nu(K_w)}\int_{K_w}(u(q)-u(x))^2\nu(\md x)\right]\\
&\le2\cdot2\cdot2\sum_{p\in V_w}\frac{1}{\nu(K_w)}\int_{K_w}(u(p)-u(x))^2\nu(\md x).
\end{aligned}
$$
Consider $(u(p)-u(x))^2$, $p\in V_w$, $x\in K_w$. There exists $w_{n+1}\in\myset{0,1,2}$ such that $p=f_{w_1}\circ\ldots\circ f_{w_n}(p_{w_{n+1}})$. Let $k,l\ge1$ be integers to be determined later, let
$$w^{(i)}=w_1\ldots w_nw_{n+1}\ldots w_{n+1}$$
with $ki$ terms of $w_{n+1}$, $i=0,\ldots,l$. For all $x^{(i)}\in K_{w^{(i)}}$, $i=0,\ldots,l$, we have
$$
\begin{aligned}
(u(p)-u(x^{(0)}))^2&\le2(u(p)-u(x^{(l)}))^2+2(u(x^{(0)})-u(x^{(l)}))^2\\
&\le2(u(p)-u(x^{(l)}))^2+2\left[2(u(x^{(0)})-u(x^{(1)}))^2+2(u(x^{(1)})-u(x^{(l)}))^2\right]\\
&=2(u(p)-u(x^{(l)}))^2+2^2(u(x^{(0)})-u(x^{(1)}))^2+2^2(u(x^{(1)})-u(x^{(l)}))^2\\
&\le\ldots\le2(u(p)-u(x^{(l)}))^2+2^2\sum_{i=0}^{l-1}2^i(u(x^{(i)})-u(x^{(i+1)}))^2.
\end{aligned}
$$
Integrating with respect to $x^{(0)}\in K_{w^{(0)}}$, \ldots, $x^{(l)}\in K_{w^{(l)}}$ and dividing by $\nu(K_{w^{(0)}})$, \ldots, $\nu(K_{w^{(l)}})$, we have
$$
\begin{aligned}
&\frac{1}{\nu(K_{w^{(0)}})}\int_{K_{w^{(0)}}}(u(p)-u(x^{(0)}))^2\nu(\md x^{(0)})\\
&\le\frac{2}{\nu(K_{w^{(l)}})}\int_{K_{w^{(l)}}}(u(p)-u(x^{(l)}))^2\nu(\md x^{(l)})\\
&+2^2\sum_{i=0}^{l-1}\frac{2^i}{\nu(K_{w^{(i)}})\nu(K_{w^{(i+1)}})}\int_{K_{w^{(i)}}}\int_{K_{w^{(i+1)}}}(u(x^{(i)})-u(x^{(i+1)}))^2\nu(\md x^{(i)})\nu(\md x^{(i+1)}).
\end{aligned}
$$
Now let us use $\nu(K_{w^{(i)}})=(1/3)^{n+ki}=2^{-\alpha(n+ki)}$. For the first term, by Lemma \ref{lem_holder}, we have
$$
\begin{aligned}
\frac{1}{\nu(K_{w^{(l)}})}\int_{K_{w^{(l)}}}(u(p)-u(x^{(l)}))^2\nu(\md x^{(l)})&\le \frac{cE(u)}{\nu(K_{w^{(l)}})}\int_{K_{w^{(l)}}}|p-x^{(l)}|^{\beta-\alpha}\nu(\md x^{(l)})\\
&\le cE(u){2}^{-(\beta-\alpha)(n+kl)}.
\end{aligned}
$$
For the second term, for all $x^{(i)}\in K_{w^{(i)}},x^{(i+1)}\in K_{w^{(i+1)}}$, we have $|x^{(i)}-x^{(i+1)}|\le2^{-(n+ki)}$, hence
$$
\begin{aligned}
&\sum_{i=0}^{l-1}\frac{2^i}{\nu(K_{w^{(i)}})\nu(K_{w^{(i+1)}})}\int_{K_{w^{(i)}}}\int_{K_{w^{(i+1)}}}(u(x^{(i)})-u(x^{(i+1)}))^2\nu(\md x^{(i)})\nu(\md x^{(i+1)})\\
&\le\sum_{i=0}^{l-1}{2^{i+\alpha(n+ki+n+ki+k)}}\int_{K_{w^{(i)}}}\int_{|x^{(i+1)}-x^{(i)}|\le2^{-n-ki}}(u(x^{(i)})-u(x^{(i+1)}))^2\nu(\md x^{(i)})\nu(\md x^{(i+1)})\\
&=\sum_{i=0}^{l-1}{2^{i+\alpha k+2\alpha(n+ki)}}\int_{K_{w^{(i)}}}\int_{|x^{(i+1)}-x^{(i)}|\le2^{-(n+ki)}}(u(x^{(i)})-u(x^{(i+1)}))^2\nu(\md x^{(i)})\nu(\md x^{(i+1)}),
\end{aligned}
$$
and
$$
\begin{aligned}
&\frac{1}{\nu(K_w)}\int_{K_w}(u(p)-u(x))^2\nu(\md x)=\frac{1}{\nu(K_{w^{(0)}})}\int_{K_{w^{(0)}}}(u(p)-u(x^{(0)}))^2\nu(\md x^{(0)})\\
&\le 2cE(u)2^{-(\beta-\alpha)(n+kl)}\\
&+4\sum_{i=0}^{l-1}{2^{i+\alpha k+2\alpha(n+ki)}}\int_{K_{w^{(i)}}}\int_{|x^{(i+1)}-x^{(i)}|\le2^{-n-ki}}(u(x^{(i)})-u(x^{(i+1)}))^2\nu(\md x^{(i)})\nu(\md x^{(i+1)}).
\end{aligned}
$$
Hence
$$
\begin{aligned}
&\sum_{w\in W_n}\sum_{p,q\in V_w}(u(p)-u(q))^2\\
&\le8\sum_{w\in W_n}\sum_{p\in V_w}\frac{1}{\nu(K_w)}\int_{K_w}(u(p)-u(x))^2\nu(\md x)\\
&\le8\sum_{w\in W_n}\sum_{p\in V_w}\left(2cE(u)2^{-(\beta-\alpha)(n+kl)}\right.\\
&\left.+4\sum_{i=0}^{l-1}{2^{i+\alpha k+2\alpha(n+ki)}}\int_{K_{w^{(i)}}}\int_{|x^{(i+1)}-x^{(i)}|\le2^{-n-ki}}(u(x^{(i)})-u(x^{(i+1)}))^2\nu(\md x^{(i)})\nu(\md x^{(i+1)})\right).
\end{aligned}
$$
For the first term, we have
$$\sum_{w\in W_n}\sum_{p\in V_w}2^{-(\beta-\alpha)(n+kl)}=3\cdot3^n\cdot2^{-(\beta-\alpha)(n+kl)}=3\cdot2^{\alpha n-(\beta-\alpha)(n+kl)}.$$
For the second term, fix $i=0,\ldots,l-1$, different $p\in V_w$, $w\in W_n$ correspond to different $K_{w^{(i)}}$, hence
$$
\begin{aligned}
&\sum_{i=0}^{l-1}\sum_{w\in W_n}\sum_{p\in V_w}2^{i+\alpha k+2\alpha(n+ki)}\\
&\cdot\int_{K_{w^{(i)}}}\int_{|x^{(i+1)}-x^{(i)}|\le2^{-n-ki}}(u(x^{(i)})-u(x^{(i+1)}))^2\nu(\md x^{(i)})\nu(\md x^{(i+1)})\\
&\le\sum_{i=0}^{l-1}2^{i+\alpha k+2\alpha(n+ki)}\int_{K}\int_{|x^{(i+1)}-x^{(i)}|\le2^{-(n+ki)}}(u(x^{(i)})-u(x^{(i+1)}))^2\nu(\md x^{(i)})\nu(\md x^{(i+1)})\\
&=2^{\alpha k}\sum_{i=0}^{l-1}2^{i-(\beta-\alpha)ki-(\beta-\alpha)n}\\
&\cdot\left(2^{(\alpha+\beta)(n+ki)}\int_{K}\int_{|x^{(i+1)}-x^{(i)}|\le2^{-(n+ki)}}(u(x^{(i)})-u(x^{(i+1)}))^2\nu(\md x^{(i)})\nu(\md x^{(i+1)})\right).
\end{aligned}
$$
For simplicity, denote
$$E_{n}(u)=2^{(\alpha+\beta)n}\int_{K}\int_{|x-y|\le2^{-n}}(u(x)-u(y))^2\nu(\md x)\nu(\md y).$$
We have
$$
\begin{aligned}
&\sum_{w\in W_n}\sum_{p,q\in V_w}(u(p)-u(q))^2\\
&\le48cE(u)\cdot2^{\alpha n-(\beta-\alpha)(n+kl)}+32\cdot2^{\alpha k}\sum_{i=0}^{l-1}2^{i-(\beta-\alpha)ki-(\beta-\alpha)n}E_{n+ki}(u).
\end{aligned}
$$
Hence
$$
\begin{aligned}
&\sum_{n=1}^\infty2^{(\beta-\alpha)n}\sum_{w\in W_n}\sum_{p,q\in V_w}(u(p)-u(q))^2\\
&\le48cE(u)\sum_{n=1}^\infty2^{\beta n-(\beta-\alpha)(n+kl)}+32\cdot2^{\alpha k}\sum_{n=1}^\infty\sum_{i=0}^{l-1}2^{i-(\beta-\alpha)ki}E_{n+ki}(u).
\end{aligned}
$$
Take $l=n$, then
$$
\begin{aligned}
&\sum_{n=1}^\infty2^{(\beta-\alpha)n}\sum_{w\in W_n}\sum_{p,q\in V_w}(u(p)-u(q))^2\\
&\le48cE(u)\sum_{n=1}^\infty2^{[\beta-(\beta-\alpha)(k+1)]n}+32\cdot2^{\alpha k}\sum_{n=1}^\infty\sum_{i=0}^{n-1}2^{i-(\beta-\alpha)ki}E_{n+ki}(u)\\
&=48cE(u)\sum_{n=1}^\infty2^{[\beta-(\beta-\alpha)(k+1)]n}+32\cdot2^{\alpha k}\sum_{i=0}^\infty2^{i-(\beta-\alpha)ki}\sum_{n=i+1}^\infty E_{n+ki}(u)\\
&\le48cE(u)\sum_{n=1}^\infty2^{[\beta-(\beta-\alpha)(k+1)]n}+32\cdot2^{\alpha k}C_1E(u)\sum_{i=0}^\infty 2^{[1-(\beta-\alpha)k]i},
\end{aligned}
$$
where $C_1$ is some positive constant from Lemma \ref{lem_equiv1}. Take $k\ge1$ such that $\beta-(\beta-\alpha)(k+1)<0$ and $1-(\beta-\alpha)k<0$, then above two series converge, hence
$$\sum_{n=1}^\infty2^{(\beta-\alpha)n}\sum_{w\in W_n}\sum_{p,q\in V_w}(u(p)-u(q))^2\lesssim\int_K\int_K\frac{(u(x)-u(y))^2}{|x-y|^{\alpha+\beta}}\nu(\md x)\nu(\md y).$$
\end{proof}

\begin{mythm}\label{thm_equiv2_2}
For all $u\in C(K)$, we have
\begin{equation}\label{eqn_equiv2_1}
\int_K\int_K\frac{(u(x)-u(y))^2}{|x-y|^{\alpha+\beta}}\nu(\md x)\nu(\md y)\lesssim\sum_{n=1}^\infty2^{(\beta-\alpha)n}\sum_{w\in W_n}\sum_{p,q\in V_w}(u(p)-u(q))^2,
\end{equation}
or equivalently
\begin{equation}\label{eqn_equiv2_2}
\sum_{n=1}^\infty2^{(\alpha+\beta)n}\int\limits_K\int\limits_{B(x,2^{-n-1})}(u(x)-u(y))^2\nu(\md y)\nu(\md x)\lesssim\sum_{n=1}^\infty2^{(\beta-\alpha)n}\sum_{w\in W_n}\sum_{p,q\in V_w}(u(p)-u(q))^2.
\end{equation}
\end{mythm}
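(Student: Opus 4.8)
The plan is to prove the form (\ref{eqn_equiv2_2}); by Corollary \ref{cor_arbi} it is equivalent to (\ref{eqn_equiv2_1}), and together with Theorem \ref{thm_equiv2_1} and Lemma \ref{lem_equiv1} it finishes Theorem \ref{thm_main}. The argument is a telescoping along a nested chain of cells in the spirit of \cite{Jon96}, and it uses only the continuity of $u$ — in this direction Lemma \ref{lem_holder} cannot help, since its right-hand side already contains $E(u)$.

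\emph{Geometric reduction.} First I would record the separation property of the Sierpi\'nski gasket: if $w,w'\in W_n$ and $V_w\cap V_{w'}=\emptyset$, then $\mathrm{dist}(K_w,K_{w'})\gtrsim2^{-n}$; this is immediate from self-similarity (two subcells of a common cell meet, if at all, only at one vertex, and with a fixed positive angular gap). Hence there is $\rho\in(0,1)$ such that $|x-y|<\rho2^{-n}$ forces the level-$n$ cells containing $x$ and $y$ to be equal or to share a common vertex. By Corollary \ref{cor_arbi} we may replace $B(x,2^{-n-1})$ in (\ref{eqn_equiv2_2}) by $B(x,\rho2^{-n})$, so it suffices to bound $\Sigma:=\sum_{n=1}^\infty2^{(\alpha+\beta)n}\iint_{\{|x-y|<\rho2^{-n}\}}(u(x)-u(y))^2\,\nu(\md x)\nu(\md y)$.

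\emph{Telescoping along a chain of cells.} For each $x\in K$ fix, measurably, a nested chain $K_{w_0(x)}\supseteq K_{w_1(x)}\supseteq\cdots$ with $w_m(x)\in W_m$ and $x\in\bigcap_mK_{w_m(x)}$ (the boundary ambiguities lie in a $\nu$-null set). The subcell $K_{w_{m+1}(x)}$ shares exactly one corner with $K_{w_m(x)}$; denote it $p_m(x)$, so $p_m(x)\in V_{w_m(x)}\cap V_{w_{m+1}(x)}$ and hence $p_m(x),p_{m+1}(x)\in V_{w_{m+1}(x)}$. Put $S(w):=\sum_{p,q\in V_w}(u(p)-u(q))^2$; then $(u(p_{m+1}(x))-u(p_m(x)))^2\le S(w_{m+1}(x))$. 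Since $\mathrm{diam}\,K_{w_m(x)}=2^{-m}\to0$ and $u\in C(K)$, we have $u(p_m(x))\to u(x)$, so
$$u(x)-u(p_n(x))=\sum_{m=n}^\infty\bigl(u(p_{m+1}(x))-u(p_m(x))\bigr),$$
and Cauchy--Schwarz against the summable weight $2^{-\varepsilon(m-n)}$, for any fixed $\varepsilon\in(0,\beta)$, gives $(u(x)-u(p_n(x)))^2\lesssim\sum_{m=n}^\infty2^{\varepsilon(m-n)}S(w_{m+1}(x))$. When $|x-y|<\rho2^{-n}$ the corners $p_n(x),p_n(y)$ lie in equal or adjacent level-$n$ cells, hence either lie in a common $V_w$ or are joined through the shared vertex, so $(u(p_n(x))-u(p_n(y)))^2\lesssim S(w_n(x))+S(w_n(y))$. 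Combining,
$$(u(x)-u(y))^2\lesssim S(w_n(x))+S(w_n(y))+\sum_{m=n}^\infty2^{\varepsilon(m-n)}\bigl(S(w_{m+1}(x))+S(w_{m+1}(y))\bigr).$$

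\emph{Summation.} I would then insert this into $\Sigma$. The bound no longer depends on $y$ apart from the constraint, so the $y$-integral contributes $\nu(B(x,\rho2^{-n}))\asymp2^{-\alpha n}$; and for every $k$, $\int_KS(w_k(x))\,\nu(\md x)=2^{-\alpha k}\sum_{w\in W_k}S(w)$ because $\{x:w_k(x)=w\}$ equals $K_w$ up to a null set. The two simple terms thus produce $\sum_n2^{(\beta-\alpha)n}\sum_{w\in W_n}S(w)$, which is exactly the right-hand side of (\ref{eqn_equiv2_2}). The main term becomes $\sum_{n\ge1}2^{\beta n}\sum_{m\ge n}2^{\varepsilon(m-n)}2^{-\alpha(m+1)}\sum_{w\in W_{m+1}}S(w)$; interchanging the two summations and using $\sum_{n=1}^{k-1}2^{(\beta-\varepsilon)n}\asymp2^{(\beta-\varepsilon)k}$ — here $\varepsilon<\beta$ is essential — the powers of $2$ collapse to exactly $2^{(\beta-\alpha)k}$, giving $\lesssim\sum_{k\ge1}2^{(\beta-\alpha)k}\sum_{w\in W_k}S(w)$, as required. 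I expect the only point needing genuine care is this last double-summation bookkeeping, so that the exponents balance precisely; the measurable choice $x\mapsto w_m(x)$ and the cell-boundary null set are routine, and, as advertised, no heat-kernel or Hölder input enters.
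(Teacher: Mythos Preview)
Your argument is correct and follows the same telescoping-along-cells strategy as the paper, both tracing back to \cite{Jon96}. The one notable difference is in execution: the paper approximates $\nu$ by discrete measures $\nu_m$ supported on $V_m$, builds for each $x\in V_m$ a \emph{finite} chain $p_n,\ldots,p_{m+1}$ down to $x$, and only at the end passes to the weak limit $\nu_m\to\nu$. You bypass this device by using continuity of $u$ directly to justify the \emph{infinite} telescope $u(x)=\lim_m u(p_m(x))$, and then integrate against $\nu$ from the start via the partition identity $\int_K S(w_k(x))\,\nu(\md x)=2^{-\alpha k}\sum_{w\in W_k}S(w)$. Your Cauchy--Schwarz weight $2^{\varepsilon(m-n)}$ with any $\varepsilon\in(0,\beta)$ plays the same role as the paper's specific weight $2^{i-n+1}$ (i.e.\ $\varepsilon=1$, which is admissible since $\beta>\alpha>1$). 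One cosmetic point: the sentence ``the bound no longer depends on $y$ apart from the constraint'' is not literally true, since $S(w_n(y))$ and $S(w_{m+1}(y))$ appear; what you mean, and should say, is that by the $(x,y)$-symmetry of the double integral the $y$-terms contribute identically to the $x$-terms. With that clarification the bookkeeping is clean and the exponents collapse exactly as you describe.
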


\begin{proof}
Note $V_n=\cup_{w\in W_n}V_w$, it is obvious that its cardinal $\#V_n\asymp3^n=2^{\alpha n}$. Let $\nu_n$ be the measure on $V_n$ which assigns $1/\#V_n$ on each point of $V_n$, then $\nu_n$ converges weakly to $\nu$.

First, fix $n\ge1$ and $m\ge n$, we estimate
$$2^{(\alpha+\beta)n}\int_K\int_{B(x,2^{-n-1})}(u(x)-u(y))^2\nu_m(\md y)\nu_m(\md x).$$
Note that
$$
\begin{aligned}
&\int_K\int_{B(x,2^{-n-1})}(u(x)-u(y))^2\nu_m(\md y)\nu_m(\md x)\\
&=\sum_{w\in W_n}\int_{K_w}\int_{B(x,2^{-n-1})}(u(x)-u(y))^2\nu_m(\md y)\nu_m(\md x).
\end{aligned}
$$
Fix $w\in W_n$, there exist at most four $\tilde{w}\in W_n$ such that $K_{\tilde{w}}\cap K_w\ne\emptyset$, let
$$K_w^*=\cup_{\tilde{w}\in W_n,K_{\tilde{w}}\cap K_w\ne\emptyset}K_{\tilde{w}}.$$
For all $x\in K_w$, $y\in B(x,2^{-n-1})$, we have $y\in K_w^*$, hence
$$\int_{K_w}\int_{B(x,2^{-n-1})}(u(x)-u(y))^2\nu_m(\md y)\nu_m(\md x)\le\int_{K_w}\int_{K_w^*}(u(x)-u(y))^2\nu_m(\md y)\nu_m(\md x).$$
For all $x\in K_w$, $y\in K_w^*$, there exists $\tilde{w}\in W_n$ such that $y\in K_{\tilde{w}}$ and $K_{\tilde{w}}\cap K_w\ne\emptyset$. Take $z\in V_w\cap V_{\tilde{w}}$, then
$$(u(x)-u(y))^2\le2(u(x)-u(z))^2+2(u(z)-u(y))^2,$$
and
$$
\begin{aligned}
&\int_{K_w}\int_{K_w^*}(u(x)-u(y))^2\nu_m(\md y)\nu_m(\md x)\\
&\le\sum_{\tilde{w}\in W_n,K_{\tilde{w}}\cap K_w\ne\emptyset}\int_{K_w}\int_{K_{\tilde{w}}}(u(x)-u(y))^2\nu_m(\md y)\nu_m(\md x)\\
&\le\sum_{\tilde{w}\in W_n,K_{\tilde{w}}\cap K_w\ne\emptyset}2\int_{K_w}\int_{K_{\tilde{w}}}\left((u(x)-u(z))^2+(u(z)-u(y))^2\right)\nu_m(\md y)\nu_m(\md x).
\end{aligned}
$$
Hence
\begin{equation}\label{eqn_tmp1}
\begin{aligned}
&\sum_{w\in W_n}\int_{K_w}\int_{K_w^*}(u(x)-u(y))^2\nu_m(\md y)\nu_m(\md x)\\
&\le2\cdot2\cdot4\cdot2\sum_{w\in W_n}\sum_{z\in V_w}\int_{K_w}(u(x)-u(z))^2\nu_m(\md x)\left(\int_{K_w}\nu_m(\md y)\right)\\
&=32\sum_{w\in W_n}\sum_{z\in V_w}\int_{K_w}(u(x)-u(z))^2\nu_m(\md x)\frac{\#(V_m\cap K_w)}{\#V_m}\\
&=32\sum_{w\in W_n}\sum_{z\in V_w}\sum_{x\in V_m\cap K_w}(u(x)-u(z))^2\frac{1}{\#V_m}\frac{\#(V_m\cap K_w)}{\#V_m}\\
&=32\frac{\#V_{m-n}}{(\#V_m)^2}\sum_{w\in W_n}\sum_{z\in V_w}\sum_{x\in V_m\cap K_w}(u(x)-u(z))^2.
\end{aligned}
\end{equation}
Let us estimate $(u(x)-u(z))^2$ for $z\in V_w$, $x\in V_m\cap K_w$, $w\in W_n$. We construct a finite sequence $p_n,\ldots,p_{m+1}$ as follows. If $w=w_1\ldots w_n\in W_n$, then
$$
\begin{aligned}
z&=P_{w_1\ldots w_nw_{n+1}},\\
x&=P_{w_1\ldots w_n\tilde{w}_{n+1}\ldots\tilde{w}_m\tilde{w}_{m+1}}.
\end{aligned}
$$
Let
$$
\begin{aligned}
p_n&=P_{w_1\ldots w_nw_{n+1}}=z,\\
p_{n+1}&=P_{w_1\ldots w_n\tilde{w}_{n+1}},\\
p_{n+2}&=P_{w_1\ldots w_n\tilde{w}_{n+1}\tilde{w}_{n+2}},\\
&\ldots\\
p_{m+1}&=P_{w_1\ldots w_n\tilde{w}_{n+1}\ldots\tilde{w}_{m}\tilde{w}_{m+1}}=x,
\end{aligned}
$$
then $|p_i-p_{i+1}|=0$ or $2^{-i}$, $i=n,\ldots,m$ and
$$
\begin{aligned}
&(u(x)-u(z))^2\\
&=(u(p_n)-u(p_{m+1}))^2\le2(u(p_n)-u(p_{n+1}))^2+2(u(p_{n+1})-u(p_{m+1}))^2\\
&\le2(u(p_n)-u(p_{n+1}))^2+2\left[2(u(p_{n+1})-u(p_{n+2}))^2+2(u(p_{n+2})-u(p_{m+1}))^2\right]\\
&=2(u(p_n)-u(p_{n+1}))^2+2^2(u(p_{n+1})-u(p_{n+2}))^2+2^2(u(p_{n+2})-u(p_{m+1}))^2\\
&\le\ldots\le\sum_{i=n}^{m}2^{i-n+1}(u(p_i)-u(p_{i+1}))^2.
\end{aligned}
$$
Let us sum up the resulting inequality for all $z\in V_w$, $x\in V_m\cap K_w$, $w\in W_n$. For all $i=n,\ldots,m$, $p,q\in V_i\cap K_w$ with $|p-q|=2^{-i}$, the term $(u(p)-u(q))^2$ occurs in the sum with times of the order $3^{m-i}$, hence
$$\sum_{w\in W_n}\sum_{z\in V_w}\sum_{x\in V_m\cap K_w}(u(x)-u(z))^2\le c\sum_{i=n}^m\sum_{w\in W_i}\sum_{p,q\in V_w}(u(p)-u(q))^2\cdot3^{m-i}\cdot2^{i-n}.$$
It follows from Equation (\ref{eqn_tmp1}) that
$$
\begin{aligned}
&\sum_{w\in W_n}\int_{K_w}\int_{K_{{w}}^*}(u(x)-u(y))^2\nu_m(\md y)\nu_m(\md x)\\
&\le c\frac{3^{m-n}}{3^{2m}}\sum_{i=n}^m\sum_{w\in W_i}\sum_{p,q\in V_w}(u(p)-u(q))^2\cdot3^{m-i}\cdot2^{i-n}\\
&=c\sum_{i=n}^m\sum_{w\in W_i}\sum_{p,q\in V_w}(u(p)-u(q))^2\cdot 3^{-n-i}\cdot2^{i-n}.
\end{aligned}
$$
Letting $m\to+\infty$, we obtain
$$\int_K\int_{B(x,2^{-n-1})}(u(x)-u(y))^2\nu(\md y)\nu(\md x)\le c\sum_{i=n}^\infty\sum_{w\in W_i}\sum_{p,q\in V_w}(u(p)-u(q))^2\cdot 3^{-n-i}\cdot2^{i-n},$$
and
$$
\begin{aligned}
&2^{(\alpha+\beta)n}\int_K\int_{B(x,2^{-n-1})}(u(x)-u(y))^2\nu(\md y)\nu(\md x)\\
&\le c\sum_{i=n}^\infty\sum_{w\in W_i}\sum_{p,q\in V_w}(u(p)-u(q))^2\cdot 2^{-(\alpha-1)i}\cdot2^{(\beta-1)n},
\end{aligned}
$$
and hence
$$
\begin{aligned}
&\sum_{n=1}^\infty2^{(\alpha+\beta)n}\int_K\int_{B(x,2^{-n-1})}(u(x)-u(y))^2\nu(\md y)\nu(\md x)\\
&\le c\sum_{n=1}^\infty\sum_{i=n}^\infty\sum_{w\in W_i}\sum_{p,q\in V_w}(u(p)-u(q))^2\cdot 2^{-(\alpha-1)i}\cdot2^{(\beta-1)n}\\
&=c\sum_{i=1}^\infty\sum_{n=1}^i\sum_{w\in W_i}\sum_{p,q\in V_w}(u(p)-u(q))^2\cdot 2^{-(\alpha-1)i}\cdot2^{(\beta-1)n}\\
&\le\frac{2^{\beta-1}c}{2^{\beta-1}-1}\sum_{i=1}^\infty2^{(\beta-\alpha)i}\sum_{w\in W_i}\sum_{p,q\in V_w}(u(p)-u(q))^2,
\end{aligned}
$$
which proves Equation (\ref{eqn_equiv2_2}). Applying Corollary \ref{cor_arbi}, we obtain Equation (\ref{eqn_equiv2_1}).
\end{proof}

\section{Proof of Theorem \ref{thm_incre} and \ref{thm_conv_main}}

For simplicity, let $\lambda=2^{-\beta}$ or $\beta=-\log\lambda/\log2$, where $\beta\in(\alpha,\beta^*)$ or $\lambda\in(1/5,1/3)$, write
$$
\mathscr{E}_\lambda(u,u)=(5\cdot 2^{-\beta}-1)E_\beta(u,u)=(5\lambda-1)\sum_{n=1}^\infty\frac{1}{(5\lambda)^n}\left[\left(\frac{5}{3}\right)^n\sum_{w\in W_n}\sum_{p,q\in V_w}(u(p)-u(q))^2\right].
$$
First, we prove Theorem \ref{thm_incre}.
\begin{proof}[Proof of Theorem \ref{thm_incre}]
If $u$ has no continuous version, then this result is obvious. Hence, we may assume that $u$ is continuous. Let
$$a_n=a_n(u)=\left(\frac{5}{3}\right)^n\sum_{w\in W_n}\sum_{p,q\in V_w}(u(p)-u(q))^2,$$
then
$$a_n\uparrow a_\infty=a_\infty(u)=\lim_{n\to+\infty}\left(\frac{5}{3}\right)^n\sum_{w\in W_n}\sum_{p,q\in V_w}(u(p)-u(q))^2.$$
First, we show that $\mathscr{E}_\lambda(u,u)\to\Ee_\loc(u,u)$ as $\lambda\downarrow1/5$, that is,
$$\lim_{\lambda\downarrow1/5}(5\lambda-1)\sum_{n=1}^\infty\frac{1}{(5\lambda)^n}a_n=a_\infty.$$
Note that
$$(5\lambda-1)\sum_{n=1}^\infty\frac{1}{(5\lambda)^n}a_n\le(5\lambda-1)\sum_{n=1}^\infty\frac{1}{(5\lambda)^n}a_\infty=a_\infty,$$
we have
$$\varlimsup_{\lambda\downarrow1/5}(5\lambda-1)\sum_{n=1}^\infty\frac{1}{(5\lambda)^n}a_n\le a_\infty.$$
On the other hand, for all $A<a_\infty$, there exists $N\ge1$ such that for all $n>N$, we have $a_n>A$, hence
$$
\begin{aligned}
(5\lambda-1)\sum_{n=1}^\infty\frac{1}{(5\lambda)^n}a_n&\ge(5\lambda-1)\sum_{n=N+1}^\infty\frac{1}{(5\lambda)^n}A=(5\lambda-1)\frac{\frac{1}{(5\lambda)^{N+1}}}{1-\frac{1}{5\lambda}}A\\
&=(5\lambda-1)\frac{\frac{1}{(5\lambda)^{N}}}{5\lambda-1}A=\frac{1}{(5\lambda)^N}A\to A,
\end{aligned}
$$
as $\lambda\downarrow1/5$, hence
$$\varliminf_{\lambda\downarrow1/5}(5\lambda-1)\sum_{n=1}^\infty\frac{1}{(5\lambda)^n}a_n\ge A,$$
for all $A<a_\infty$, hence
$$\varliminf_{\lambda\downarrow1/5}(5\lambda-1)\sum_{n=1}^\infty\frac{1}{(5\lambda)^n}a_n\ge a_\infty.$$
We have
$$\lim_{\lambda\downarrow1/5}(5\lambda-1)\sum_{n=1}^\infty\frac{1}{(5\lambda)^n}a_n=a_\infty.$$
If $\mathscr{E}_\lambda(u,u)<+\infty$, then we have
$$\sum_{n=1}^\infty\frac{1}{(5\lambda)^n}a_n<+\infty.$$
Hence
$$
\begin{aligned}
(5\lambda-1)\sum_{n=1}^\infty\frac{1}{(5\lambda)^n}a_n&=5\lambda\sum_{n=1}^\infty\frac{1}{(5\lambda)^n}a_n-\sum_{n=1}^\infty\frac{1}{(5\lambda)^n}a_n=\sum_{n=1}^\infty\frac{1}{(5\lambda)^{n-1}}a_n-\sum_{n=1}^\infty\frac{1}{(5\lambda)^n}a_n\\
&=\sum_{n=0}^\infty\frac{1}{(5\lambda)^n}a_{n+1}-\sum_{n=1}^\infty\frac{1}{(5\lambda)^n}a_n=a_1+\sum_{n=1}^\infty\frac{1}{(5\lambda)^n}(a_{n+1}-a_n).
\end{aligned}
$$
Assume that $1/3>\lambda_1>\lambda_2>1/5$ and observe the following
\begin{itemize}
\item If $\mathscr{E}_{\lambda_2}(u,u)=+\infty$, then it is obvious that $\mathscr{E}_{\lambda_2}(u,u)\ge\mathscr{E}_{\lambda_1}(u,u)$.
\item If $\mathscr{E}_{\lambda_2}(u,u)<+\infty$, then we have $\mathscr{E}_{\lambda_1}(u,u)<+\infty$, hence
$$\mathscr{E}_{\lambda_1}(u,u)=a_1+\sum_{n=1}^\infty\frac{1}{(5\lambda_1)^n}(a_{n+1}-a_n)\le a_1+\sum_{n=1}^\infty\frac{1}{(5\lambda_2)^n}(a_{n+1}-a_n)=\mathscr{E}_{\lambda_2}(u,u).$$
\end{itemize}
Therefore, $\mathscr{E}_{\lambda_2}(u,u)\ge\mathscr{E}_{\lambda_1}(u,u)$ and $\mathscr{E}_\lambda(u,u)\uparrow\Ee_\loc(u,u)$ as $\lambda\downarrow1/5$.
\end{proof}

In what follows, $K$ is a locally compact separable metric space and $\nu$ is a Radon measure on $K$ with full support. If $(\Ee,\Ff)$ is a closed form on $L^2(K;\nu)$, we extend $\Ee$ to be $+\infty$ outside $\Ff$, hence the information of $\Ff$ is encoded in $\Ee$.

\begin{mydef}\label{def_Mosco}
Let $\Ee^n$, $\Ee$ be closed forms on $L^2(K;\nu)$. We say that $\Ee^n$ converges to $\Ee$ in the sense of Mosco if the following conditions are satisfied.
\begin{enumerate}[(1)]
\item\label{def_Mosco_1} For all $\myset{u_n}\subseteq L^2(K;\nu)$ that converges weakly to $u\in L^2(K;\nu)$, we have
$$\varliminf_{n\to+\infty}\Ee^n(u_n,u_n)\ge\Ee(u,u).$$
\item\label{def_Mosco_2} For all $u\in L^2(K;\nu)$, there exists a sequence $\myset{u_n}\subseteq L^2(K;\nu)$ converging strongly to $u$ in $L^2(K;\nu)$ such that
$$\varlimsup_{n\to+\infty}\Ee^n(u_n,u_n)\le\Ee(u,u).$$
\end{enumerate}
\end{mydef}

Let $\myset{P_t:t>0}$, $\myset{P^n_t:t>0}$ be the semigroups and $\myset{G_\alpha:\alpha>0}$, $\myset{G^n_\alpha:\alpha>0}$ the resolvents corresponding to $\Ee$, $\Ee^n$. We have the following equivalence.

\begin{myprop}(\cite[Theorem 2.4.1, Corollary 2.6.1]{Mos94})\label{prop_Mosco}
The followings are equivalent.
\begin{enumerate}[(1)]
\item $\Ee^n$ converges to $\Ee$ in the sense of Mosco.
\item $P^n_tu\to P_tu$ in $L^2(K;\nu)$ for all $t>0$, $u\in L^2(K;\nu)$.
\item $G^n_\alpha u\to G_\alpha u$ in $L^2(K;\nu)$ for all $\alpha>0$, $u\in L^2(K;\nu)$.
\end{enumerate}
\end{myprop}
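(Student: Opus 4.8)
The statement is a classical equivalence due to Mosco; the plan is to reconstruct its proof around the variational characterization of the resolvent. First I would recall the operator-theoretic dictionary: each closed symmetric form $(\Ee,\Ff)$ corresponds, via the representation theorem, to a unique non-negative self-adjoint operator $A$ on $L^2(K;\nu)$ with $\Ee(u,v)=\langle A^{1/2}u,A^{1/2}v\rangle$, and then $G_\alpha=(\alpha+A)^{-1}$ and $P_t=e^{-tA}$ are defined by functional calculus. The crucial observation is that, for fixed $\alpha>0$ and $f\in L^2(K;\nu)$, the element $w=G_\alpha f$ is the unique minimizer of the strictly convex, coercive functional
$$\Phi^{f}_\alpha(v)=\Ee(v,v)+\alpha\|v\|^2-2\langle f,v\rangle,$$
since its Euler--Lagrange equation $\Ee(w,v)+\alpha\langle w,v\rangle=\langle f,v\rangle$ for all $v\in\Ff$ is exactly the weak form of $(\alpha+A)w=f$.

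With this in hand, I would prove $(1)\Leftrightarrow(3)$ by identifying Mosco convergence with $\Gamma$-convergence, in the weak--strong topology of $L^2(K;\nu)$, of the functionals $\Phi^{f,n}_\alpha$ built from $\Ee^n$. Condition (\ref{def_Mosco_1}) is precisely the $\liminf$ inequality for $\Phi^{f,n}_\alpha$ (the linear term is weakly continuous and $\alpha\|\cdot\|^2$ is weakly lower semicontinuous), while condition (\ref{def_Mosco_2}) supplies the recovery sequence. For $(1)\Rightarrow(3)$: coercivity bounds the minimizers $w_n=G^n_\alpha f$ uniformly, so along a weak subsequence $w_n\rightharpoonup w$; comparing $\Phi^{f,n}_\alpha(w_n)$ against a recovery sequence for the true minimizer $G_\alpha f$ and invoking the $\liminf$ inequality forces $w=G_\alpha f$, and uniqueness of the minimizer promotes this to convergence of the whole sequence. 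Strong convergence is then recovered because the energies converge, $\Phi^{f,n}_\alpha(w_n)\to\Phi^f_\alpha(G_\alpha f)$, which combined with weak convergence pins down $\|w_n\|\to\|G_\alpha f\|$. For $(3)\Rightarrow(1)$ I would run the converse, using the approximating forms $\Ee^{(\alpha)}(u,u)=\alpha\langle u-\alpha G_\alpha u,u\rangle\uparrow\Ee(u,u)$ to rebuild the form from the resolvents and to verify both Mosco conditions by a diagonal argument.

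Finally, $(2)\Leftrightarrow(3)$ is a Trotter--Kato type statement that I would settle via the spectral theorem for the self-adjoint generators. The resolvent is the Laplace transform of the semigroup, $G_\alpha=\int_0^\infty e^{-\alpha t}P_t\,\md t$, and conversely the semigroup is recovered from the resolvent through the exponential formula, so strong convergence of one family for a single $\alpha>0$ propagates to all $\alpha>0$ and to the semigroup. Concretely, strong resolvent convergence is equivalent to $g(A_n)\to g(A)$ strongly for every $g\in C_0([0,\infty))$, and applying this to $g(\lambda)=e^{-t\lambda}$ yields $P^n_t\to P_t$ strongly.

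I expect the main obstacle to be the upgrade from weak to strong convergence of the resolvents in $(1)\Rightarrow(3)$, together with the reconstruction direction $(3)\Rightarrow(1)$, where one must carefully exploit the monotone approximating forms and a diagonal extraction to verify the recovery condition; the analytic half $(2)\Leftrightarrow(3)$ is comparatively routine once the spectral calculus is invoked.
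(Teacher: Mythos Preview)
The paper does not supply its own proof of this proposition; it is quoted directly from Mosco's paper with the citation \cite[Theorem 2.4.1, Corollary 2.6.1]{Mos94} and is used as a black box. Your sketch is a faithful outline of the standard argument (essentially Mosco's): the variational characterization of $G_\alpha f$ as the minimizer of $\Phi^f_\alpha$, the identification of Mosco convergence with $\Gamma$-convergence of these functionals to handle $(1)\Leftrightarrow(3)$, and the Trotter--Kato/spectral-calculus passage for $(2)\Leftrightarrow(3)$. There is nothing to compare against in the paper itself, and your plan contains no substantive gap; the points you flag as delicate (upgrading weak to strong convergence of minimizers via convergence of energies, and the reconstruction $(3)\Rightarrow(1)$ through the Yosida-type approximations $\alpha\langle u-\alpha G_\alpha u,u\rangle\uparrow\Ee(u,u)$) are exactly where the work lies in Mosco's proof.
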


We have following corollary.

\begin{mycor}\label{cor_Mosco}
Let $(\Ee,\Ff)$ be a closed form on $L^2(K;\nu)$, then for all $\myset{u_n}\subseteq L^2(K;\nu)$ that converges weakly to $u\in L^2(K;\nu)$, we have
\begin{equation}\label{eqn_liminf}
\Ee(u,u)\le\varliminf_{n\to+\infty}\Ee(u_n,u_n).
\end{equation}
\end{mycor}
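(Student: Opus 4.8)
The plan is to obtain \eqref{eqn_liminf} as the degenerate case of the Mosco machinery, by applying Proposition \ref{prop_Mosco} and Definition \ref{def_Mosco} to the constant sequence $\Ee^n := \Ee$. The point is that the whole content of the corollary is the weak lower semicontinuity of the energy of a single closed form, and this is precisely the liminf condition in the definition of Mosco convergence once one checks that a closed form ``converges to itself''.

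First I would note that the semigroup and resolvent attached to the constant sequence $\Ee^n = \Ee$ are the constant sequences $P^n_t = P_t$ and $G^n_\alpha = G_\alpha$. In particular $P^n_t u = P_t u \to P_t u$ in $L^2(K;\nu)$ for every $t>0$ and every $u\in L^2(K;\nu)$, so the second condition of Proposition \ref{prop_Mosco} holds trivially. By the equivalence asserted in Proposition \ref{prop_Mosco}, the constant sequence $\Ee^n = \Ee$ therefore converges to $\Ee$ in the sense of Mosco. Next I would invoke the first condition of Definition \ref{def_Mosco} for this Mosco-convergent sequence: for any $\myset{u_n}\subseteq L^2(K;\nu)$ with $u_n\to u$ weakly in $L^2(K;\nu)$ we get $\varliminf_{n\to+\infty}\Ee^n(u_n,u_n)\ge\Ee(u,u)$, and since $\Ee^n=\Ee$ this is exactly \eqref{eqn_liminf}.

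I do not expect a genuine obstacle here; the only thing to be careful about is that Proposition \ref{prop_Mosco} is a statement about arbitrary sequences of closed forms, hence is legitimately applicable to the constant one, so no circularity is involved. For completeness I would also record the self-contained alternative that avoids Proposition \ref{prop_Mosco}: let $\Ee^{(\gamma)}(v,v)=(\gamma(I-\gamma G_\gamma)v,v)$ denote the Yosida-type approximating forms, which are nonnegative quadratic forms given by bounded self-adjoint operators and hence weakly lower semicontinuous on $L^2(K;\nu)$, and which satisfy $\Ee^{(\gamma)}(v,v)\uparrow\Ee(v,v)$ as $\gamma\to+\infty$ for every $v\in L^2(K;\nu)$; then from $\Ee^{(\gamma)}(u,u)\le\varliminf_{n\to+\infty}\Ee^{(\gamma)}(u_n,u_n)\le\varliminf_{n\to+\infty}\Ee(u_n,u_n)$ and taking the supremum over $\gamma$ one recovers \eqref{eqn_liminf}.
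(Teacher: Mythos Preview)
Your proposal is correct and matches the paper's own proof essentially verbatim: the paper also sets $\Ee^n=\Ee$, invokes Proposition \ref{prop_Mosco} to conclude trivial Mosco convergence, and then reads off \eqref{eqn_liminf} from condition (\ref{def_Mosco_1}) of Definition \ref{def_Mosco}. Your additional self-contained argument via the Yosida approximations $\Ee^{(\gamma)}$ is a nice bonus not present in the paper.
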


\begin{proof}
Let $\Ee^n=\Ee$ for all $n\ge1$, then by Proposition \ref{prop_Mosco}, $\Ee^n$ is trivially convergent to $\Ee$ in the sense of Mosco. By definition, Equation (\ref{eqn_liminf}) is obvious.
\end{proof}
Note that it will be tedious to prove Corollary \ref{cor_Mosco} directly.

In what follows, $K$ is SG in $\R^2$ and $\nu$ is the normalized Hausdorff measure on $K$.
\begin{proof}[Proof of Theorem \ref{thm_conv_main}]
First, we check condition (\ref{def_Mosco_2}). For all $u\in L^2(K;\nu)$, let $u_n=u$ for all $n\ge1$, then $u_n$ is trivially convergent to $u$ in $L^2(K;\nu)$ and by Theorem \ref{thm_incre}, we have
$$\Ee_\loc(u,u)=\lim_{n\to+\infty}\mathscr{E}_{\lambda_n}(u,u)=\lim_{n\to+\infty}\mathscr{E}_{\lambda_n}(u_n,u_n).$$
Then, we check condition (\ref{def_Mosco_1}). For all $\myset{u_n}\subseteq L^2(K;\nu)$ that converges weakly to $u\in L^2(K;\nu)$. For all $m\ge1$, by Corollary \ref{cor_Mosco}, we have
$$\mathscr{E}_{\lambda_m}(u,u)\le\varliminf_{n\to+\infty}\mathscr{E}_{\lambda_m}(u_n,u_n),$$
by Theorem \ref{thm_incre}, for all $n\ge m$, we have $\mathscr{E}_{\lambda_m}(u_n,u_n)\le\mathscr{E}_{\lambda_n}(u_n,u_n)$, hence
$$\mathscr{E}_{\lambda_m}(u,u)\le\varliminf_{n\to+\infty}\mathscr{E}_{\lambda_m}(u_n,u_n)\le\varliminf_{n\to+\infty}\mathscr{E}_{\lambda_n}(u_n,u_n).$$
By Theorem \ref{thm_incre} again, we have
$$\Ee_\loc(u,u)=\lim_{m\to+\infty}\mathscr{E}_{\lambda_m}(u,u)\le\varliminf_{n\to+\infty}\mathscr{E}_{\lambda_n}(u_n,u_n).$$
Hence $\mathscr{E}_{\lambda_n}$ converges to $\Ee_\loc$ in the sense of Mosco.
\end{proof}
Mosco convergence in Theorem \ref{thm_conv_main} implies that appropriate time-changed jump processes can approximate the diffusion at least in the sense of finite-dimensional distribution.

\section{Proof of Theorem \ref{thm_trace_main}}

Similar to Lemma \ref{lem_equiv1}, we have the following result for the unit interval. For all $u\in L^2(I)$, we have
$$\int_{I}\int_I\frac{(u(x)-u(y))^2}{|x-y|^{1+\beta}}\md x\md y\asymp\sum_{n=0}^\infty2^{n}2^{\beta n}\int_I\int_{B(x,2^{-n})}(u(x)-u(y))^2\md y\md x.$$
Combining this result with Equation (\ref{eqn_interval}), we obtain that for all $u\in C(I)$
$$\sum_{n=1}^\infty2^{-n}2^{\beta n}\sum_{i=0}^{2^n-1}(u(\frac{i}{2^n})-u(\frac{i+1}{2^n}))^2\asymp\sum_{n=0}^\infty2^{n}2^{\beta n}\int_I\int_{B(x,2^{-n})}(u(x)-u(y))^2\md y\md x.$$
Since $\beta_1\in(\alpha_1,\beta_1^*)$, we have $\beta_2=\beta_1-(\alpha_1-\alpha_2)\in(\alpha_2,\beta_1^*-\alpha_1+\alpha_2)\subseteq(\alpha_2,\beta_2^*)$. For all $u\in B^{2,2}_{\alpha_1,\beta_1}(K)$, we have $u\in C(K)$, hence $u|_I\in C(I)$. Note that
$$\sum_{n=1}^\infty2^{-\alpha_2n}2^{\beta_2 n}\sum_{i=0}^{2^n-1}(u(\frac{i}{2^n})-u(\frac{i+1}{2^n}))^2\le\sum_{n=1}^\infty2^{-\alpha_1 n}2^{\beta_1 n}\sum_{w\in W_n}\sum_{p,q\in V_w}(u(p)-u(q))^2<+\infty,$$
hence $u|_I\in B^{2,2}_{\alpha_2,\beta_2}(I)$.$\square$

\section{Proof of Theorem \ref{thm_jumping_kernel}}

First, we construct equivalent semi-norms with jumping kernels that converge exactly to local Dirichlet form.

For all $\lambda\in(1/5,1/3)$, $(\mathscr{E}_{\lambda},\Ff_{-\log\lambda/\log2})$ is a non-local regular Dirichlet form on $L^2(K;\nu)$, by Beurling-Deny formula, there exists a unique jumping measure $J_\lambda$ on $K\times K\backslash\mathrm{diag}$ such that for all $u\in\Ff_{-\log\lambda/\log2}$, we have
$$\mathscr{E}_{\lambda}(u,u)=\iint_{K\times K\backslash\mathrm{diag}}(u(x)-u(y))^2J_\lambda(\md x\md y).$$
It is obvious that
$$J_{\lambda}(\md x\md y)=(5\lambda-1)\sum_{n=1}^\infty\frac{1}{(3\lambda)^n}\sum_{w\in W_n}\sum_{p,q\in V_w}\delta_p(\md x)\delta_q(\md y),$$
where $\delta_p,\delta_q$ are Dirac measures at $p,q$, respectively. Hence $J_\lambda$ is singular with respect to $\nu\times\nu$ and no jumping kernel exists. Since
$$
\begin{aligned}
&\sum_{n=1}^\infty\frac{1}{(3\lambda)^n}\sum_{w\in W_n}\sum_{p,q\in V_w}(u(p)-u(q))^2=\sum_{n=1}^\infty2^{(\beta-\alpha)n}\sum_{w\in W_n}\sum_{p,q\in V_w}(u(p)-u(q))^2\\
&=\iint_{K\times K\backslash\mathrm{diag}}(u(x)-u(y))^2\left(\sum_{n=1}^\infty2^{(\beta-\alpha)n}\sum_{w\in W_n}\sum_{p,q\in V_w}\delta_p(\md x)\delta_q(\md y)\right),
\end{aligned}
$$
where
$$
\begin{aligned}
&\sum_{n=1}^\infty2^{(\beta-\alpha)n}\sum_{w\in W_n}\sum_{p,q\in V_w}\delta_p(\md x)\delta_q(\md y)=\sum_{n=1}^\infty\sum_{w\in W_n}\sum_{p,q\in V_w}\frac{1}{|p-q|^{\alpha+\beta}}{|p-q|^{2\alpha}}\delta_p(\md x)\delta_q(\md y)\\
&=\frac{1}{|x-y|^{\alpha+\beta}}\sum_{n=1}^\infty2^{-2\alpha n}\sum_{w\in W_n}\sum_{p,q\in V_w}\delta_p(\md x)\delta_q(\md y)=\frac{1}{|x-y|^{\alpha+\beta}}J(\md x\md y),
\end{aligned}
$$
and
$$J(\md x\md y)=\sum_{n=1}^\infty2^{-2\alpha n}\sum_{w\in W_n}\sum_{p,q\in V_w}\delta_p(\md x)\delta_q(\md y).$$

\begin{myprop}\label{prop_kernel1}
Let
$$c_i(x,y)=\sum_{n=1}^\infty2^{-2\alpha n}\sum_{w\in W_n}\sum_{p,q\in V_w}\frac{1}{\nu(K^{(i)}_{p,n})\nu(K^{(i)}_{q,n})}1_{K^{(i)}_{p,n}}(x)1_{K^{(i)}_{q,n}}(y),$$
then for all $u\in C(K)$, we have
\begin{equation}\label{eqn_kernel1}
\begin{aligned}
&\left(1-C(\frac{2^{\alpha-\gamma i}}{1-2^{\alpha-\gamma i}}+\frac{2^{\alpha-\frac{\beta-\alpha}{2}\gamma i}}{1-2^{\alpha-\frac{\beta-\alpha}{2}\gamma i}})\right)\sum_{n=1}^\infty 2^{(\beta-\alpha)n}\sum_{w\in W_n}\sum_{p,q\in V_w}(u(p)-u(q))^2\\
&\le\iint_{K\times K\backslash\mathrm{diag}}\frac{c_i(x,y)(u(x)-u(y))^2}{|x-y|^{\alpha+\beta}}\nu(\md x)\nu(\md y)\\
&\le\left(1+C(\frac{2^{\alpha-\gamma i}}{1-2^{\alpha-\gamma i}}+\frac{2^{\alpha-\frac{\beta-\alpha}{2}\gamma i}}{1-2^{\alpha-\frac{\beta-\alpha}{2}\gamma i}})\right)\sum_{n=1}^\infty 2^{(\beta-\alpha)n}\sum_{w\in W_n}\sum_{p,q\in V_w}(u(p)-u(q))^2.
\end{aligned}
\end{equation}
\end{myprop}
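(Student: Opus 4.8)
The plan is to expand the kernel $c_i$ and to compare, level by level in $n$, the block
$$B_n:=\sum_{w\in W_n}\sum_{p,q\in V_w}\frac{2^{-2\alpha n}}{\nu(K^{(i)}_{p,n})\nu(K^{(i)}_{q,n})}\iint_{K^{(i)}_{p,n}\times K^{(i)}_{q,n}}\frac{(u(x)-u(y))^2}{|x-y|^{\alpha+\beta}}\nu(\md x)\nu(\md y)$$
of $\iint_{K\times K\backslash\mathrm{diag}}\frac{c_i(x,y)(u(x)-u(y))^2}{|x-y|^{\alpha+\beta}}\nu(\md x)\nu(\md y)$ with the block $R_n:=2^{(\beta-\alpha)n}\sum_{w\in W_n}\sum_{p,q\in V_w}(u(p)-u(q))^2$ of the right-hand side of (\ref{eqn_kernel1}). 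In every sum over $p,q\in V_w$ only the terms with $p\ne q$ play a role (those with $p=q$ contribute $0$ to the semi-norm, and in the Beurling--Deny picture $J$ and its density approximants live on $K\times K\backslash\mathrm{diag}$), so I take all such sums restricted to $p\ne q$. One may assume $\Ee_\beta(u,u)<+\infty$, and I shall use Theorem \ref{thm_main} freely: it gives $\Ee_\beta(u,u)\asymp\sum_{n\ge1}R_n$, so it suffices to bound $\sum_{n\ge1}|B_n-R_n|$ by a constant multiple of $\Ee_\beta(u,u)$ times the factor appearing in (\ref{eqn_kernel1}).

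Fix $n$, $w\in W_n$ and $p\ne q\in V_w$. Then $|p-q|=2^{-n}$, while $K^{(i)}_{p,n}$ and $K^{(i)}_{q,n}$ are cells of diameter $2^{-(n+\gamma n i)}$ containing $p$ and $q$ respectively, with $\nu(K^{(i)}_{p,n})=\nu(K^{(i)}_{q,n})=2^{-\alpha(n+\gamma n i)}$. Hence for $x\in K^{(i)}_{p,n}$ and $y\in K^{(i)}_{q,n}$ one has $|x-y|=2^{-n}(1+\theta)$ with $|\theta|\le 2\cdot 2^{-\gamma n i}$, so $|x-y|^{-(\alpha+\beta)}=2^{(\alpha+\beta)n}(1+O(2^{-\gamma n i}))$ uniformly, while Lemma \ref{lem_holder} gives
$$\frac{1}{\nu(K^{(i)}_{p,n})}\int_{K^{(i)}_{p,n}}(u(x)-u(p))^2\,\nu(\md x)\le c\,\Ee_\beta(u,u)\,2^{-(\beta-\alpha)(n+\gamma n i)}$$
and similarly at $q$. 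For a parameter $\veps\in(0,1)$ to be chosen, the elementary inequalities $(a+b+c)^2\le(1+\veps)a^2+\frac{C}{\veps}(b^2+c^2)$ and $(a+b+c)^2\ge(1-\veps)a^2-\frac{C}{\veps}(b^2+c^2)$ with $a=u(p)-u(q)$, $b=u(x)-u(p)$, $c=u(q)-u(y)$, combined with the two preceding facts and then averaged over $x\in K^{(i)}_{p,n}$, $y\in K^{(i)}_{q,n}$, show that the $(w,p,q)$-summand of $B_n$ differs from $2^{(\beta-\alpha)n}(u(p)-u(q))^2$ by at most
$$C\bigl(\veps+2^{-\gamma n i}\bigr)2^{(\beta-\alpha)n}(u(p)-u(q))^2+\frac{C}{\veps}\,\Ee_\beta(u,u)\,2^{(\beta-\alpha)n}2^{-(\beta-\alpha)(n+\gamma n i)}.$$

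Now sum over the $\asymp 2^{\alpha n}$ pairs $(w,p,q)$ with $w\in W_n$ and $p\ne q\in V_w$. Using Lemma \ref{lem_holder} once more in the crude form $(u(p)-u(q))^2\le c\,\Ee_\beta(u,u)2^{-(\beta-\alpha)n}$ (which in particular gives $R_n\lesssim\Ee_\beta(u,u)2^{\alpha n}$), we obtain
$$|B_n-R_n|\le C\,\Ee_\beta(u,u)\,2^{\alpha n}\Bigl(\veps+2^{-\gamma n i}+\frac{1}{\veps}2^{-(\beta-\alpha)\gamma n i}\Bigr).$$
Choosing $\veps=\veps_n=2^{-\frac{\beta-\alpha}{2}\gamma n i}$ balances the $\veps$-term against the $\frac1\veps$-term and makes $|B_n-R_n|\le C\,\Ee_\beta(u,u)\bigl(2^{(\alpha-\gamma i)n}+2^{(\alpha-\frac{\beta-\alpha}{2}\gamma i)n}\bigr)$. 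Summing over $n\ge1$, which converges once $i$ is large enough that both exponents are negative, gives $\sum_{n\ge1}|B_n-R_n|\le C\,\Ee_\beta(u,u)\bigl(\frac{2^{\alpha-\gamma i}}{1-2^{\alpha-\gamma i}}+\frac{2^{\alpha-\frac{\beta-\alpha}{2}\gamma i}}{1-2^{\alpha-\frac{\beta-\alpha}{2}\gamma i}}\bigr)$, and replacing $\Ee_\beta(u,u)$ by $\sum_{n\ge1}R_n$ via Theorem \ref{thm_main} yields (\ref{eqn_kernel1}). The step demanding care is the simultaneous control of the two competing errors --- the geometric mismatch between $|x-y|$ and $|p-q|$, of relative size $2^{-\gamma n i}$, and the oscillation of $u$ inside the small cells $K^{(i)}_{p,n}$, handled through $\veps$ --- together with the choice of $\veps_n$ for which, after multiplication by the combinatorial factor $2^{\alpha n}$, both errors sum to geometric series with precisely the exponents $\alpha-\gamma i$ and $\alpha-\frac{\beta-\alpha}{2}\gamma i$ of the statement.
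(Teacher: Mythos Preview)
Your argument is correct and follows essentially the same route as the paper: both proofs compare the integral with the discrete sum level by level, separating the geometric error $|x-y|^{\alpha+\beta}$ versus $|p-q|^{\alpha+\beta}$ from the oscillation error of $u$ inside the small cells, controlling the latter via the H\"older embedding of Lemma~\ref{lem_holder}, and summing the resulting geometric series in $n$. The only difference is cosmetic: where the paper bounds $|(u(x)-u(y))^2-(u(p)-u(q))^2|$ by factoring it as a product $|(u(x)-u(y))+(u(p)-u(q))|\cdot|(u(x)-u(p))+(u(q)-u(y))|$ and applying Lemma~\ref{lem_holder} to each factor, you instead use the parametrized inequality $(a+b+c)^2\lessgtr(1\pm\veps)a^2\pm\tfrac{C}{\veps}(b^2+c^2)$ and optimize $\veps$ --- both devices produce exactly the exponent $\alpha-\tfrac{\beta-\alpha}{2}\gamma i$.
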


\begin{proof}
Note that
$$
\begin{aligned}
&\iint_{K\times K\backslash\mathrm{diag}}\frac{c_i(x,y)(u(x)-u(y))^2}{|x-y|^{\alpha+\beta}}\nu(\md x)\nu(\md y)\\
&=\sum_{n=1}^\infty2^{-2\alpha n}\sum_{w\in W_n}\sum_{p,q\in V_w}\frac{1}{\nu(K^{(i)}_{p,n})\nu(K^{(i)}_{q,n})}\int_{K^{(i)}_{p,n}}\int_{K^{(i)}_{q,n}}\frac{(u(x)-u(y))^2}{|x-y|^{\alpha+\beta}}\nu(\md x)\nu(\md y).
\end{aligned}
$$
Since
$$\frac{1}{\nu(K^{(i)}_{p,n})\nu(K^{(i)}_{q,n})}1_{K^{(i)}_{p,n}}(x)1_{K^{(i)}_{q,n}}(y)\nu(\md x)\nu(\md y)\text{ converges weakly to }\delta_p(\md x)\delta_q(\md y),$$
for all $u\in C(K)$, we have
$$
\begin{aligned}
&\lim_{i\to+\infty}\frac{1}{\nu(K^{(i)}_{p,n})\nu(K^{(i)}_{q,n})}\int_{K^{(i)}_{p,n}}\int_{K^{(i)}_{q,n}}\frac{(u(x)-u(y))^2}{|x-y|^{\alpha+\beta}}\nu(\md x)\nu(\md y)\\
&=\frac{(u(p)-u(q))^2}{|p-q|^{\alpha+\beta}}=2^{(\alpha+\beta)n}(u(p)-u(q))^2.
\end{aligned}
$$
By Fatou's lemma, we have
$$
\begin{aligned}
&\sum_{n=1}^\infty 2^{(\beta-\alpha)n}\sum_{w\in W_n}\sum_{p,q\in V_w}(u(p)-u(q))^2\\
&\le\varliminf_{i\to+\infty}\iint_{K\times K\backslash\mathrm{diag}}\frac{c_i(x,y)(u(x)-u(y))^2}{|x-y|^{\alpha+\beta}}\nu(\md x)\nu(\md y).
\end{aligned}
$$
If $\text{LHS}=+\infty$, then $E(u)=+\infty$, the limit in RHS exists and equals to $+\infty$. Hence, we may assume that $E(u)<+\infty$, by Lemma \ref{lem_holder}, we have
$$|u(x)-u(y)|^2\le cE(u)|x-y|^{\beta-\alpha}\text{ for all }x,y\in K.$$
Consider
$$
\begin{aligned}
&\lvert\frac{1}{\nu(K^{(i)}_{p,n})\nu(K^{(i)}_{q,n})}\int_{K^{(i)}_{p,n}}\int_{K^{(i)}_{q,n}}\frac{(u(x)-u(y))^2}{|x-y|^{\alpha+\beta}}\nu(\md x)\nu(\md y)-\frac{(u(p)-u(q))^2}{|p-q|^{\alpha+\beta}}\rvert\\
&\le\frac{1}{\nu(K^{(i)}_{p,n})\nu(K^{(i)}_{q,n})}\int_{K^{(i)}_{p,n}}\int_{K^{(i)}_{q,n}}\lvert\frac{(u(x)-u(y))^2}{|x-y|^{\alpha+\beta}}-\frac{(u(p)-u(q))^2}{|p-q|^{\alpha+\beta}}\rvert\nu(\md x)\nu(\md y).
\end{aligned}
$$
For all $x\in K^{(i)}_{p,n}, y\in K^{(i)}_{q,n}$, we have
$$
\begin{aligned}
&\lvert\frac{(u(x)-u(y))^2}{|x-y|^{\alpha+\beta}}-\frac{(u(p)-u(q))^2}{|p-q|^{\alpha+\beta}}\rvert\\
&\le\frac{1}{|x-y|^{\alpha+\beta}|p-q|^{\alpha+\beta}}\left((u(x)-u(y))^2\lvert |p-q|^{\alpha+\beta}-|x-y|^{\alpha+\beta}\rvert\right.\\
&\left.+\lvert(u(x)-u(y))^2-(u(p)-u(q))^2\rvert\cdot |x-y|^{\alpha+\beta}\right).
\end{aligned}
$$
Since
$$|p-q|\ge|x-y|\ge|p-q|-|x-p|-|y-q|\ge|p-q|-\frac{2}{2^{\gamma ni}}|p-q|=\left(1-\frac{2}{2^{\gamma ni}}\right)|p-q|,$$
for all $i\ge2$, we have $|x-y|\ge|p-q|/2$, for all $i\ge1$, we have
$$|p-q|^{\alpha+\beta}\ge|x-y|^{\alpha+\beta}\ge\left(1-\frac{2}{2^{\gamma ni}}\right)^{\alpha+\beta}|p-q|^{\alpha+\beta}.$$
Hence
$$\frac{1}{|x-y|^{\alpha+\beta}|p-q|^{\alpha+\beta}}\le\frac{2^{\alpha+\beta}}{|p-q|^{2(\alpha+\beta)}}=2^{\alpha+\beta}2^{2(\alpha+\beta)n},$$
$$(u(x)-u(y))^2\le cE(u)|x-y|^{\beta-\alpha}\le cE(u)|p-q|^{\beta-\alpha}=cE(u)2^{-(\beta-\alpha)n},$$
$$\lvert |p-q|^{\alpha+\beta}-|x-y|^{\alpha+\beta}\rvert\le|p-q|^{\alpha+\beta}\left[1-(1-\frac{2}{2^{\gamma ni}})^{\alpha+\beta}\right]\le 2(\alpha+\beta)2^{-(\alpha+\beta)n-\gamma ni},$$
$$
\begin{aligned}
&\lvert(u(x)-u(y))^2-(u(p)-u(q))^2\rvert\\
&=\lvert(u(x)-u(y))+(u(p)-u(q))\rvert\cdot\lvert(u(x)-u(y))-(u(p)-u(q))\rvert\\
&\le\left(|u(x)-u(y)|+|u(p)-u(q)|\right)\left(|u(x)-u(p)|+|u(y)-u(q)|\right)\\
&\le cE(u)\left(|x-y|^{\frac{\beta-\alpha}{2}}+|p-q|^{\frac{\beta-\alpha}{2}}\right)\left(|x-p|^{\frac{\beta-\alpha}{2}}+|y-q|^{\frac{\beta-\alpha}{2}}\right)\\
&\le 4cE(u)2^{-\frac{\beta-\alpha}{2}(2n+\gamma ni)},
\end{aligned}
$$
$$|x-y|^{\alpha+\beta}\le|p-q|^{\alpha+\beta}=2^{-(\alpha+\beta)n},$$
hence
$$\lvert\frac{(u(x)-u(y))^2}{|x-y|^{\alpha+\beta}}-\frac{(u(p)-u(q))^2}{|p-q|^{\alpha+\beta}}\rvert\le2^{\alpha+\beta}cE(u)\left(2(\alpha+\beta)2^{2\alpha n-\gamma ni}+4\cdot 2^{2\alpha n-\frac{\beta-\alpha}{2}\gamma ni}\right),$$
hence
$$
\begin{aligned}
&|\sum_{n=1}^\infty 2^{(\beta-\alpha)n}\sum_{w\in W_n}\sum_{p,q\in V_w}(u(p)-u(q))^2\\
&-\sum_{n=1}^\infty2^{-2\alpha n}\sum_{w\in W_n}\sum_{p,q\in V_w}\frac{1}{\nu(K^{(i)}_{p,n})\nu(K^{(i)}_{q,n})}\int_{K^{(i)}_{p,n}}\int_{K^{(i)}_{q,n}}\frac{(u(x)-u(y))^2}{|x-y|^{\alpha+\beta}}\nu(\md x)\nu(\md y)|\\
&\le\sum_{n=1}^\infty 2^{-2\alpha n}2^{\alpha n}2^{\alpha+\beta}cE(u)\left(2(\alpha+\beta)2^{2\alpha n-\gamma ni}+4\cdot 2^{2\alpha n-\frac{\beta-\alpha}{2}\gamma ni}\right)\\
&\le CE(u)\sum_{n=1}^\infty\left(2^{\alpha n-\gamma ni}+2^{\alpha n-\frac{\beta-\alpha}{2}\gamma ni}\right)=CE(u)\sum_{n=1}^\infty\left(2^{(\alpha-\gamma i)n}+2^{(\alpha-\frac{\beta-\alpha}{2}\gamma i)n}\right).
\end{aligned}
$$
Choose $\gamma\ge1$ such that $\alpha-\gamma<0$ and $\alpha-\frac{\beta-\alpha}{2}\gamma<0$, then
$$\sum_{n=1}^\infty\left(2^{(\alpha-\gamma i)n}+2^{(\alpha-\frac{\beta-\alpha}{2}\gamma i)n}\right)=\frac{2^{\alpha-\gamma i}}{1-2^{\alpha-\gamma i}}+\frac{2^{\alpha-\frac{\beta-\alpha}{2}\gamma i}}{1-2^{\alpha-\frac{\beta-\alpha}{2}\gamma i}}\to0,$$
as $i\to+\infty$. Hence
$$
\begin{aligned}
&|\iint_{K\times K\backslash\mathrm{diag}}\frac{c_i(x,y)(u(x)-u(y))^2}{|x-y|^{\alpha+\beta}}\nu(\md x)\nu(\md y)-\sum_{n=1}^\infty 2^{(\beta-\alpha)n}\sum_{w\in W_n}\sum_{p,q\in V_w}(u(p)-u(q))^2|\\
&\le CE(u)\left(\frac{2^{\alpha-\gamma i}}{1-2^{\alpha-\gamma i}}+\frac{2^{\alpha-\frac{\beta-\alpha}{2}\gamma i}}{1-2^{\alpha-\frac{\beta-\alpha}{2}\gamma i}}\right),
\end{aligned}
$$
hence we have Equation (\ref{eqn_kernel1}).
\end{proof}

Second, we do appropriate cutoff to have bounded jumping kernels.

\begin{myprop}\label{prop_kernel2}
For all sequence $\myset{\beta_i}\subseteq(\alpha,\beta^*)$ with $\beta_i\uparrow\beta^*$. Let
$$C_i(x,y)=\sum_{n=1}^{\Phi(i)}2^{-2\alpha n}\sum_{w\in W_n}\sum_{p,q\in V_w}\frac{1}{\nu(K^{(i)}_{p,n})\nu(K^{(i)}_{q,n})}1_{K^{(i)}_{p,n}}(x)1_{K^{(i)}_{q,n}}(y),$$
where $\Phi:\mathbb{N}\to\mathbb{N}$ is increasing and $(5\cdot2^{-\beta_i}-1)\Phi(i)\ge i$ for all $i\ge1$.
Then for all $u\in\Ff_\loc$, we have
$$\lim_{i\to+\infty}(5\cdot 2^{-\beta_i}-1)\iint_{K\times K\backslash\mathrm{diag}}\frac{C_i(x,y)(u(x)-u(y))^2}{|x-y|^{\alpha+\beta_i}}\nu(\md x)\nu(\md y)=\Ee_\loc(u,u).$$
\end{myprop}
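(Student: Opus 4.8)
The plan is to reduce the statement to the scalar series computation underlying Theorem \ref{thm_incre}. Write $\lambda_i=2^{-\beta_i}$, so that $\lambda_i\downarrow1/5$ and $5\lambda_i-1=5\cdot2^{-\beta_i}-1\downarrow0$, and put $a_n=a_n(u)=(5/3)^n\sum_{w\in W_n}\sum_{p,q\in V_w}(u(p)-u(q))^2$, which increases to $\Ee_\loc(u,u)<+\infty$ because $u\in\Ff_\loc$. I would keep the integer $\gamma\ge1$ from the definition of $K^{(i)}_{p,n}$ fixed but chosen large enough that Proposition \ref{prop_kernel1} applies for \emph{every} $\beta=\beta_i$; since $\beta_i\in[\beta_1,\beta^*)$ this needs only $\gamma>\alpha$ and $\gamma(\beta_1-\alpha)>2\alpha$, a condition not depending on $i$. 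Expanding the kernel $C_i$ (the diagonal pairs $p=q$ being discarded throughout), the quantity to be analysed is
$$
\begin{aligned}
T_i&:=\iint_{K\times K\backslash\mathrm{diag}}\frac{C_i(x,y)(u(x)-u(y))^2}{|x-y|^{\alpha+\beta_i}}\nu(\md x)\nu(\md y)\\
&=\sum_{n=1}^{\Phi(i)}2^{-2\alpha n}\sum_{w\in W_n}\sum_{p,q\in V_w}\frac{1}{\nu(K^{(i)}_{p,n})\nu(K^{(i)}_{q,n})}\int_{K^{(i)}_{p,n}}\int_{K^{(i)}_{q,n}}\frac{(u(x)-u(y))^2}{|x-y|^{\alpha+\beta_i}}\nu(\md x)\nu(\md y),
\end{aligned}
$$
and I would compare $T_i$ with $\tilde T_i:=\sum_{n=1}^{\Phi(i)}2^{(\beta_i-\alpha)n}\sum_{w\in W_n}\sum_{p,q\in V_w}(u(p)-u(q))^2=\sum_{n=1}^{\Phi(i)}(5\lambda_i)^{-n}a_n$, i.e.\ the truncation of $E_{\beta_i}(u,u)$ at level $\Phi(i)$, treating $(5\lambda_i-1)(T_i-\tilde T_i)$ and $(5\lambda_i-1)\tilde T_i$ separately.

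For the error term I would re-run, summing now only over $1\le n\le\Phi(i)$, the termwise estimate from the proof of Proposition \ref{prop_kernel1}: since $u\in\Ff_\loc\subseteq\Ff_{\beta_i}$, Lemma \ref{lem_holder} gives $|u(x)-u(y)|^2\le cE(u)|x-y|^{\beta_i-\alpha}$ with $E(u)=\Ee_{\beta_i}(u,u)<+\infty$, and the same computation there yields
$$|T_i-\tilde T_i|\le CE(u)\Big(\tfrac{2^{\alpha-\gamma i}}{1-2^{\alpha-\gamma i}}+\tfrac{2^{\alpha-\frac{\beta_i-\alpha}{2}\gamma i}}{1-2^{\alpha-\frac{\beta_i-\alpha}{2}\gamma i}}\Big)=:CE(u)\veps_i,$$
with $\veps_i\to0$ exponentially as $i\to\infty$ since $\gamma i\to\infty$ and $\tfrac{\beta_i-\alpha}{2}\gamma i\to\infty$. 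The one feature genuinely new compared with Proposition \ref{prop_kernel1} is that $E(u)=\Ee_{\beta_i}(u,u)$ now grows with $i$, so I must bound $(5\lambda_i-1)E(u)$: by Theorem \ref{thm_main}, whose comparison constant may be taken uniform for $\beta$ in the compact interval $[\beta_1,\beta^*]$, together with $a_n\le\Ee_\loc(u,u)$,
$$(5\lambda_i-1)\Ee_{\beta_i}(u,u)\le C^{*}(5\lambda_i-1)E_{\beta_i}(u,u)=C^{*}(5\lambda_i-1)\sum_{n\ge1}(5\lambda_i)^{-n}a_n\le C^{*}\Ee_\loc(u,u).$$
Hence $(5\lambda_i-1)|T_i-\tilde T_i|\le C\,C^{*}\Ee_\loc(u,u)\,\veps_i\to0$.

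The treatment of $(5\lambda_i-1)\tilde T_i$ is the heart of the argument, and it is essentially the monotone-limit argument of Theorem \ref{thm_incre} applied to a finite sum. From $a_n\le\Ee_\loc(u,u)$ one gets at once $(5\lambda_i-1)\tilde T_i\le(5\lambda_i-1)\sum_{n\ge1}(5\lambda_i)^{-n}\Ee_\loc(u,u)=\Ee_\loc(u,u)$, hence $\varlimsup_i(5\lambda_i-1)\tilde T_i\le\Ee_\loc(u,u)$. For the matching lower bound, fix $A<\Ee_\loc(u,u)$, choose $N$ with $a_n>A$ for all $n>N$, observe that $(5\lambda_i-1)\Phi(i)\ge i$ forces $\Phi(i)\to\infty$, and estimate, for large $i$,
$$(5\lambda_i-1)\tilde T_i\ge(5\lambda_i-1)A\sum_{n=N+1}^{\Phi(i)}(5\lambda_i)^{-n}=A(5\lambda_i)^{-N}\big(1-(5\lambda_i)^{-(\Phi(i)-N)}\big).$$
The truncation enters only through $(5\lambda_i)^{-(\Phi(i)-N)}$, and killing this factor is exactly what the hypothesis $(5\cdot2^{-\beta_i}-1)\Phi(i)\ge i$ is for: since $0<5\lambda_i-1<1$ we have $\log(5\lambda_i)\ge\tfrac12(5\lambda_i-1)$, so $(5\lambda_i)^{\Phi(i)}\ge e^{\frac12(5\lambda_i-1)\Phi(i)}\ge e^{i/2}\to\infty$, while $(5\lambda_i)^{-N}\to1$; thus the right-hand side tends to $A$, giving $\varliminf_i(5\lambda_i-1)\tilde T_i\ge A$ for every $A<\Ee_\loc(u,u)$. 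Consequently $(5\lambda_i-1)\tilde T_i\to\Ee_\loc(u,u)$, and combining with the previous paragraph, $(5\lambda_i-1)T_i\to\Ee_\loc(u,u)$, which is the assertion. I expect this last step to be the main obstacle: $\Phi$ must grow fast enough that $(5\lambda_i)^{\Phi(i)}\to\infty$ so that the truncated geometric sum recovers all of $\Ee_\loc(u,u)$, yet the discarded tail $\sum_{n>\Phi(i)}(5\lambda_i)^{-n}a_n$ has size of order $(5\lambda_i)^{-\Phi(i)}/(5\lambda_i-1)$ after multiplying by $5\lambda_i-1$, so only a hypothesis of the type $(5\lambda_i-1)\Phi(i)\to\infty$ can close the gap, and $(5\cdot2^{-\beta_i}-1)\Phi(i)\ge i$ is precisely such a condition; everything else is routine, the only remaining point being the uniformity over $\beta_i\in[\beta_1,\beta^*]$ of the constants from Lemma \ref{lem_holder}, Theorem \ref{thm_main} and Proposition \ref{prop_kernel1}, all of which depend continuously on $\beta$ there.
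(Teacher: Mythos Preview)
Your proposal is correct and follows essentially the same route as the paper: split the quantity into the truncated discrete sum $\tilde T_i=\sum_{n=1}^{\Phi(i)}(5\lambda_i)^{-n}a_n$ and an error term, control the error via the termwise estimates from the proof of Proposition~\ref{prop_kernel1}, and handle $(5\lambda_i-1)\tilde T_i$ by the monotone--limit argument of Theorem~\ref{thm_incre}, using the hypothesis $(5\lambda_i-1)\Phi(i)\ge i$ to force $(5\lambda_i)^{\Phi(i)}\to\infty$. The only cosmetic differences are that the paper writes the error bound directly in terms of the discrete semi-norm $E_{\beta_i}(u,u)$ (so that $(5\lambda_i-1)E_{\beta_i}(u,u)\le\Ee_\loc(u,u)$ follows immediately from Theorem~\ref{thm_incre} without invoking the uniformity of the comparison constant in Theorem~\ref{thm_main}), and that the paper shows $(5\lambda_i)^{\Phi(i)}\to\infty$ via $(1+x)^{1/x}$ rather than via $\log(1+x)\ge x/2$; your attention to the uniformity in $\beta$ of the constants from Lemma~\ref{lem_holder} and Proposition~\ref{prop_kernel1} is a point the paper leaves implicit.
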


\begin{proof}
By the proof of Proposition \ref{prop_kernel1}, for all $u\in\Ff_\loc$, we have
$$
\begin{aligned}
&\lvert\iint_{K\times K\backslash\mathrm{diag}}\frac{C_i(x,y)(u(x)-u(y))^2}{|x-y|^{\alpha+\beta_i}}\nu(\md x)\nu(\md y)-\sum_{n=1}^{\Phi(i)}2^{(\beta_i-\alpha)n}\sum_{w\in W_n}\sum_{p,q\in V_w}(u(p)-u(q))^2\rvert\\
&\le CE_{\beta_i}(u,u)\sum_{n=1}^{\Phi(i)}\left(2^{(\alpha-\gamma i)n+2^{(\alpha-\frac{\beta_i-\alpha}{2}\gamma i)n}}\right)\le CE_{\beta_i}(u,u)\left(\frac{2^{\alpha-\gamma i}}{1-2^{\alpha-\gamma i}}+\frac{2^{\alpha-\frac{\beta_i-\alpha}{2}\gamma i}}{1-2^{\alpha-\frac{\beta_i-\alpha}{2}\gamma i}}\right),
\end{aligned}
$$
hence
$$
\begin{aligned}
&\lvert(5\cdot2^{-\beta_i}-1)\iint_{K\times K\backslash\mathrm{diag}}\frac{C_i(x,y)(u(x)-u(y))^2}{|x-y|^{\alpha+\beta_i}}\nu(\md x)\nu(\md y)\\
&-(5\cdot2^{-\beta_i}-1)\sum_{n=1}^{\Phi(i)}2^{(\beta_i-\alpha)n}\sum_{w\in W_n}\sum_{p,q\in V_w}(u(p)-u(q))^2\rvert\\
&\le C(5\cdot2^{-\beta_i}-1)E_{\beta_i}(u,u)\left(\frac{2^{\alpha-\gamma i}}{1-2^{\alpha-\gamma i}}+\frac{2^{\alpha-\frac{\beta_i-\alpha}{2}\gamma i}}{1-2^{\alpha-\frac{\beta_i-\alpha}{2}\gamma i}}\right)\to0,
\end{aligned}
$$
as $i\to+\infty$. Hence we only need to show that for all $u\in\Ff_\loc$
$$\lim_{i\to+\infty}(5\cdot2^{-\beta_i}-1)\sum_{n=1}^{\Phi(i)}2^{(\beta_i-\alpha)n}\sum_{w\in W_n}\sum_{p,q\in V_w}(u(p)-u(q))^2=\Ee_\loc(u,u).$$
Let $\lambda_i=2^{-\beta_i}$, then $\myset{\lambda_i}\subseteq(1/5,1/3)$ and $\lambda_i\downarrow1/5$. We use the notions of the proof of Theorem \ref{thm_incre}. We only need to show that for all $u\in\Ff_\loc$
$$\lim_{i\to+\infty}(5\lambda_i-1)\sum_{n=1}^{\Phi(i)}\frac{1}{(5\lambda_i)^n}a_n=a_\infty.$$
It is obvious that
$$\varlimsup_{i\to+\infty}(5\lambda_i-1)\sum_{n=1}^{\Phi(i)}\frac{1}{(5\lambda_i)^n}a_n\le\varlimsup_{i\to+\infty}(5\lambda_i-1)\sum_{n=1}^{\infty}\frac{1}{(5\lambda_i)^n}a_n=a_\infty.$$
On the other hand, for all $A<a_\infty$, there exists $N_0\ge1$ such that $a_n>A$ for all $n>N_0$, hence
$$
\begin{aligned}
&(5\lambda_i-1)\sum_{n=1}^{\Phi(i)}\frac{1}{(5\lambda_i)^n}a_n\ge(5\lambda_i-1)\sum_{n=N_0+1}^{\Phi(i)}\frac{1}{(5\lambda_i)^n}A\\
&=(5\lambda_i-1)\frac{\frac{1}{(5\lambda_i)^{N_0+1}}\left(1-\frac{1}{(5\lambda_i)^{\Phi(i)-N_0}}\right)}{1-\frac{1}{5\lambda_i}}A=\frac{1}{(5\lambda_i)^{N_0}}\left(1-\frac{1}{(5\lambda_i)^{\Phi(i)-N_0}}\right)A.\\
\end{aligned}
$$
It is obvious that $1/(5\lambda_i)^{N_0}\to1$ as $i\to+\infty$. Since $(5\lambda_i-1)\Phi(i)\ge i$, we have
$$
\begin{aligned}
(5\lambda_i)^{\Phi(i)}&=(1+5\lambda_i-1)^{\Phi(i)}=\left[(1+5\lambda_i-1)^{\frac{1}{5\lambda_i-1}}\right]^{(5\lambda_i-1)\Phi(i)}\\
&\ge\left[(1+5\lambda_i-1)^{\frac{1}{5\lambda_i-1}}\right]^{i}\to+\infty.
\end{aligned}
$$
Hence
$$\varliminf_{i\to+\infty}(5\lambda_i-1)\sum_{n=1}^{\Phi(i)}\frac{1}{(5\lambda_i)^n}a_n\ge A$$
for all $A<a_\infty$, hence
$$\lim_{i\to+\infty}(5\lambda_i-1)\sum_{n=1}^{\Phi(i)}\frac{1}{(5\lambda_i)^n}a_n=a_\infty.$$
\end{proof}

Now we give the proof of Theorem \ref{thm_jumping_kernel}.

\begin{proof}[Proof of Theorem \ref{thm_jumping_kernel}]
For all $u\in\Ff_\loc$, by Proposition \ref{prop_kernel2},we have
$$\lim_{i\to+\infty}(5\cdot 2^{-\beta_i}-1)\iint_{K\times K\backslash\mathrm{diag}}\frac{C_i(x,y)(u(x)-u(y))^2}{|x-y|^{\alpha+\beta_i}}\nu(\md x)\nu(\md y)=\Ee_\loc(u,u),$$
by Theorem \ref{thm_main} and Theorem \ref{thm_incre}, we have
$$
\begin{aligned}
\frac{1}{C}\Ee_\loc(u,u)&\le\varliminf_{i\to+\infty}(5\cdot 2^{-\beta_i}-1)\iint_{K\times K\backslash\mathrm{diag}}\frac{(u(x)-u(y))^2}{|x-y|^{\alpha+\beta_i}}\nu(\md x)\nu(\md y)\\
&\le\varlimsup_{i\to+\infty}(5\cdot 2^{-\beta_i}-1)\iint_{K\times K\backslash\mathrm{diag}}\frac{(u(x)-u(y))^2}{|x-y|^{\alpha+\beta_i}}\nu(\md x)\nu(\md y)\le C\Ee_\loc(u,u),
\end{aligned}
$$
hence
$$\lim_{i\to+\infty}(1-\delta_i)(5\cdot 2^{-\beta_i}-1)\iint_{K\times K\backslash\mathrm{diag}}\frac{(u(x)-u(y))^2}{|x-y|^{\alpha+\beta_i}}\nu(\md x)\nu(\md y)=0,$$
hence
$$\lim_{i\to+\infty}(5\cdot 2^{-\beta_i}-1)\iint_{K\times K\backslash\mathrm{diag}}\frac{a_i(x,y)(u(x)-u(y))^2}{|x-y|^{\alpha+\beta_i}}\nu(\md x)\nu(\md y)=\Ee_\loc(u,u).$$
It is obvious that $a_i=\delta_iC_i+(1-\delta_i)$ is bounded from above and below by positive constants.
\end{proof}

\def\cprime{$'$}

\end{document}